\newtheorem{theorem}{Theorem}[section] 
\newtheorem{lemma}[theorem]{Lemma}
\newtheorem{proposition}[theorem]{Proposition}
\newtheorem{corollary}[theorem]{Corollary}
\newtheorem{problem}[theorem]{Problem}
\newtheorem{conjecture}[theorem]{Conjecture}
\theoremstyle{remark}
\newtheorem{example}[theorem]{Example}
\newtheorem{remark}[theorem]{Remark}
\theoremstyle{definition}
\newtheorem{definition}[theorem]{Definition}
\title{Matroids arising from algebraic shifting}
\author[1]{Lazar Guterman{\thanks{lazar.guterman@mail.huji.ac.il}}}
\author[1,2,3]{Eran Nevo\thanks{eran.nevo@uva.es, nevo@math.huji.ac.il \\ Both authors were partially supported by the Israel Science Foundation grant ISF-687/24.}}
\affil[1]{{\it \normalsize Einstein Institute of Mathematics, Hebrew University}}
\affil[2]{{\it \normalsize Mathematics Research Institute, Universidad de Valladolid}}
\affil[3]{{\it \normalsize CMSA, Harvard University}}
\date{}
\begin{document}

\maketitle

\begin{abstract}

 We characterize the shifted simple graphs and the $3$-uniform shifted hypergraphs whose inverse image under exterior shifting is the set of bases of a matroid: 
 those are exactly the
 hypergraphs whose hyperedges form an initial lex-segment. There are several examples of known matroids arising in this way: the simplicial matroid, the hyperconnectivity matroid and the area-rigidity matroid. For $k\ge 4$, we provide a similar characterization for shifted  $k$-uniform hypergraphs satisfying an additional combinatorial condition.

 For symmetric shifting, we prove an analogous characterization for shifted simple graphs, where the classical generic rigidity matroid is an example of a matroid arising in this way. 
\end{abstract}

\section{Introduction}
A collection $H$ of $k$-element subsets of $[n]$  is {\itshape shifted} if for every $i<j$, $j\in S\in H$, $i\notin S$, also $(S\setminus\{j\})\cup \{i\}\in H$. After fixing a field, a canonical operation of {\itshape algebraic shifting}, which associates with every simplicial complex $K$ a shifted simplicial complex $\Delta(K)$, 
was introduced by Kalai \cite{K ch, K Alg Sh}. This operation preserves some important properties of $K$, such as its $f$-vector and $\beta$-vector, while loses others, such as the homotopy type of $K$. 
Various properties of $K$ are described in terms of its shifting, such as its homology and Cohen-Macaulyness over the fixed field.

Two different variations of algebraic shifting were introduced 
by Kalai: {\itshape exterior 
shifting} and {\itshape symmetric 
shifting}. Given a simplicial complex $K$ we denote by $\Delta^e(K)$ and $\Delta^s(K)$ its exterior and symmetric shifting respectively, and sometimes omit the superscript for  statements that hold for both types of algebraic shifting. Throughout the paper all results on exterior shifting are satisfied over any field, and all results on symmetric shifting are satisfied over any field of characteristic zero.

We start our exposition with a simple known example. Let $n$ be a positive integer. The {\itshape graphic matroid} of the complete graph on $n$ vertices is a collection of all spanning trees on the vertex set $[n]$. Let $G$ be such a spanning tree. Note that since algebraic shifting preserves Betti numbers, for every edge $e\in\Delta(G)$, we have $1\in e$. Note also that since algebraic shifting preserves $f$-vector, it then follows that $\Delta(G)=\{12,13,\ldots,1n\}$. Conversely, one can similarly verify that every $G\in\Delta^{-1}(\{12,13,\ldots,1n\})$ is a spanning tree on $[n]$. Thus, we conclude that $\Delta^{-1}(\{12,13,\ldots,1n\})$ coincides with the graphic matroid of the complete graph on $n$ vertices. The main goal of this paper is to study this phenomenon in a general setting.

Let $H$ be a shifted $k$-uniform hypergraph on $[n]$ (equivalently, the collection of facets of a pure simplicial complex). 
Denote by
$$M(H)=\left\{G\subseteq \binom{[n]}k|\ \Delta^e(G)=H\right\}$$
all $k$-uniform hypergraphs whose exterior shifting is $H$. 
We say that
$H$ is {\itshape matroidal} if $M(H)$ is the set of bases of a matroid; we abuse notation and also denote that matroid by $M(H)$.

For example, if $H$ is the collection of $k$-subsets of $[n]$ containing $1$, then $M(H)$ is the Crapo-Rota simplicial matroid \cite{CR1, CR2}, 
whose independent sets are the $(k-1)$-acyclic collections of $k$-subsets of $[n]$, namely those that do not support any $(k-1)$-homology cycle over a fixed 
field; see e.g.
~\cite[Chapter 6]{White} and \cite[Section 4, Remark (2)]{K Symm} for further background. If $H=\{ij:\ 1\le i<j\le n, \ i\le k\}$ then $M(H)$ is Kalai's $k$-hyperconnectivity matroid~\cite{K Hyper}; see e.g. \cite[Section 4, Remark (2)]{K Symm} and \cite[Section 2.7]{K Alg Sh} for further background. If $H=\{1ij:\ 2\le i<j\le n, \ i\le 3\}$ then $M(H)$ is the area-rigidity matroid, as recently proved in~\cite[Theorem 1.2]{BNP}. 

We conjecture a combinatorial characterization of the algebraic property of being \emph{matroidal}:
\begin{conjecture}\label{conj:any_k}
For every natural numbers $1\le k\le n$, a shifted $k$-uniform hypergraph $H\subseteq\binom{[n]}{k}$ is matroidal if and only if the hyperedges of $H$ form an initial segment w.r.t. the lexicogrpaphic order
(initial lex-segment for short).
\end{conjecture}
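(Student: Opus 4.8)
The plan is to analyze $M(H)$ through the greedy description of exterior shifting. Recall that $\Delta^e(G)$ is computed from a generic $n\times n$ matrix $A=(a_{ij})$ by putting $f_j=\sum_i a_{ij}e_i$ and, for a $k$-set $S$, $f_S=f_{s_1}\wedge\cdots\wedge f_{s_k}$; the coordinate of $f_S$ along $e_T$ is the maximal minor $\det A[T,S]$, i.e.\ the $(T,S)$ entry of the compound matrix $\wedge^kA$. Writing $p_G$ for the coordinate projection of $\wedge^kV$ onto $\langle e_T:T\in G\rangle$, the shift selects $k$-sets greedily in lexicographic order, placing $S$ in $\Delta^e(G)$ exactly when $p_G(f_S)$ is \emph{not} in the span of the previously selected $p_G(f_T)$. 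Since $\dim\langle e_T:T\in G\rangle=|G|$, this recovers $|\Delta^e(G)|=|G|$ and gives a clean criterion: with $m:=|H|$, one has $\Delta^e(G)=H$ if and only if (i) $\{p_G(f_S):S\in H\}$ is linearly independent, equivalently $\det\wedge^kA[G,H]\neq0$, and (ii) for every $S\notin H$, $p_G(f_S)\in\langle p_G(f_T):T\in H,\ T<_{\mathrm{lex}}S\rangle$. By Cramer's rule, condition (ii) is equivalent to the vanishing $\det\wedge^kA[G,(H\setminus T)\cup S]=0$ for every pair $S\notin H$, $T\in H$ with $S<_{\mathrm{lex}}T$.

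For the "if" direction, suppose $H$ is an initial lex-segment. Then for every $S\notin H$ all of $H$ lies below $S$, so condition (ii) is automatic and $M(H)=\{G:\ |G|=m,\ \det\wedge^kA[G,H]\neq0\}$. This is exactly the set of bases of the matroid of rank $m$ represented by the rows of the $\binom{n}{k}\times m$ matrix $\wedge^kA[\,\cdot\,,H]$; being representable, $M(H)$ is a matroid, so $H$ is matroidal.

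For the "only if" direction I argue by contraposition: assuming $H$ shifted but not an initial lex-segment, I exhibit a failure of the basis-exchange axiom in $M(H)$. Writing $M_0(H')$ for the representable matroid on $\binom{[n]}{k}$ with columns $H'$, the criterion above gives
\[ M(H)=\mathrm{bases}\big(M_0(H)\big)\ \setminus\ \bigcup_{(S,T)}\mathrm{bases}\big(M_0((H\setminus T)\cup S)\big), \]
the union running over gap-pairs $S\notin H$, $T\in H$, $S<_{\mathrm{lex}}T$, which exist precisely because $H$ is not initial. Thus each gap discards those $G$ that are bases for the column set $H$ but also bases for a lexicographically earlier column set, and the claim is that such deletion cannot preserve exchange. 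The model case is $H=\{12,13,23\}$ on $[4]$: since the $f$-vector and $\beta_1$ are preserved, $M(H)$ consists of the four triangles of $K_4$, and these fail the exchange axiom, so $M(H)$ is not a matroid. The plan is to globalize this: select a minimal gap, build from the base $H$ itself a second base differing in a controlled way, and show that no admissible swap can simultaneously keep $\det\wedge^kA[G,H]\neq0$ and satisfy all the vanishing conditions (ii).

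The main obstacle is precisely this last step for general $k$. The vanishing conditions attached to different gaps are not independent: they are linked by the Grassmann--Pl\"ucker relations among the minors of $\wedge^kA$, which can render some conditions redundant, or identically satisfied, and obscure which $G$ actually survive; one must also check that the active gap conditions genuinely remove bases rather than cut out the empty set. These relations are manageable for $k\le 3$ and, as the paper's partial results show, are tamed by an additional combinatorial hypothesis on $H$ once $k\ge 4$. Removing that hypothesis — i.e.\ controlling the interaction of all gap conditions for arbitrary shifted $H$ — is the crux of the full conjecture.
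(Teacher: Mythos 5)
The statement you are proving is labelled a \emph{conjecture} in the paper, and the paper itself does not prove it in full: it establishes the ``if'' direction (Corollary \ref{Initial segment is a matroid}), the ``only if'' direction for $k=2,3$ (Theorem \ref{thm: main thm 2 and 3}), and the ``only if'' direction for $k\ge 4$ only under the extra hypothesis $s_H\notin H$ (Theorem \ref{main theorem}). Your ``if'' argument is correct and is essentially the paper's: the greedy/compound-matrix description shows that for an initial lex-segment $H$ the lex-smaller vanishing conditions are vacuous, so $M(H)$ coincides with the set of bases of the representable matroid given by the rows of $X\left(\binom{[n]}{k},H\right)$ (Kalai's $M'(H)$). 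Your reduction of $\Delta^e(G)=H$ to the nonvanishing of $\det \wedge^k A[G,H]$ together with the vanishing of $\det \wedge^k A[G,(H\setminus T)\cup S]$ for gap-pairs $S<_{\mathrm{lex}}T$ is also a correct and clean reformulation, and your model case $\{12,13,23\}$ matches the paper's Example \ref{example: of not matroidal graph}.

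The genuine gap is the entire ``only if'' direction: what you give there is a plan, not a proof, and you say so yourself. Concretely, the step ``no admissible swap can simultaneously keep $\det\wedge^kA[G,H]\neq 0$ and satisfy all the vanishing conditions'' is never carried out, and it is exactly where the difficulty lives; controlling the interaction of the gap conditions for arbitrary shifted $H$ with $k\ge 4$ is open. It is also worth noting that the paper's partial results do \emph{not} proceed by analysing the Grassmann--Pl\"ucker relations among these vanishing loci, as your plan suggests. Instead the paper constructs an explicit violating pair $(H,\pi(H))$ for a carefully chosen permutation $\pi$ of $[n]$ (using invariance of shifting under relabelling), takes $e_1$ to be either a facet of a simplex boundary contained in $H$ or the lex-maximum $m_H$, and rules out every candidate $e_2$ by one of a few soft lemmas: preservation of Betti numbers (Lemma \ref{lemma: homology}), restriction to an induced subhypergraph (Lemma \ref{lemma: induced subgraph}), monotonicity for lex-small $e_2$ (Lemma \ref{lemma: e_2<max}), and a single explicit nonvanishing determinant via a diagonal-monomial count (Lemma \ref{lemma: compound matrix with s}), together with a cone/star reduction (Lemmas \ref{coning preserve property to not be a matroid} and \ref{lemma: reduction}). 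If you want to push your approach, you would either need to reproduce analogues of those reductions or genuinely tame the dependency among the minors $\det\wedge^kA[G,(H\setminus T)\cup S]$; as written, your proposal proves only the ``if'' half of the conjecture.
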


The "if" part follows easily from a result of Kalai~\cite[Section 4]{K Symm}. 
We prove the "only if" part for $k=2$ (i.e. for graphs) and for $k=3$:
\begin{theorem} \label{thm: main thm 2 and 3}
For $k=2,3$, 
a shifted 
$k$-uniform 
hypergraph $H\subseteq\binom{[n]}{k}$ is matroidal if and only if the (hyper)edges of $H$ form an 
initial lex-segment.
\end{theorem}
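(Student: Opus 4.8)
The plan is to work throughout with the standard generic model of exterior shifting. Fix a generic matrix and let $f_1,\dots,f_n\in V$ be the associated generic vectors; for $S=\{s_1<\dots<s_k\}$ write $f_S=f_{s_1}\wedge\cdots\wedge f_{s_k}\in\bigwedge^k V$, so that $\{f_S\}_{S\in\binom{[n]}{k}}$ is a basis. For a $k$-uniform hypergraph $G$ put $U_G=\operatorname{span}\{e_T:\ |T|=k,\ T\notin G\}$. Since supersets of non-faces are non-faces, the degree-$k$ part of the exterior face ideal of the complex generated by $G$ is exactly $U_G$; hence $\bigwedge^k V/U_G$ has dimension $|G|$ and $\Delta^e(G)$ is the lexicographically first basis of $\{\bar f_S\}$ in this quotient. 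Reading off the greedy selection, $\Delta^e(G)=H$ is equivalent to the conjunction of (i) $F_H\oplus U_G=\bigwedge^k V$, where $F_H=\operatorname{span}\{f_S:S\in H\}$, and (ii) for every $S\notin H$, $f_S\in \operatorname{span}\{f_{S'}:S'\in H,\ S'<_L S\}+U_G$. Condition (i) alone says that $\binom{[n]}{k}\setminus G$ is a basis of the matroid $\mathcal Q_H$ represented by the images of the $e_T$ in $\bigwedge^k V/F_H$, i.e. that $G$ is a basis of the dual matroid $\mathcal Q_H^{*}$; this recovers the ``if'' direction, because for a lex-initial $H$ condition (ii) is automatic. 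Moreover $f$-vector preservation gives $|G|=|H|$ for all $G\in M(H)$, and $H\in M(H)$, so $M(H)$ is nonempty and equicardinal and the only possible failure is of the basis-exchange axiom.

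For the ``only if'' direction I assume $H$ shifted but not an initial lex-segment and aim to violate basis exchange. Let $S_0$ be the lex-first element of $\binom{[n]}{k}\setminus H$; non-initiality means some $S_1\in H$ satisfies $S_0<_L S_1$, so $\operatorname{span}\{f_{S'}:S'\in H,\ S'<_L S_0\}$ is a proper subspace and condition (ii) at $S_0$ is a nontrivial, non-generic constraint on $G$. Reading this constraint combinatorially is the first step: I expect it to force a deficiency in the generic rank of a prescribed initial segment of the vertices of $G$ (for $k=2$ a connectivity deficiency; for $k=3$ a $2$-acyclicity deficiency). It is exactly such conditions that make $M(H)$ a proper subfamily of $\operatorname{bases}(\mathcal Q_H^{*})$, and that I will exploit.

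I would then localize the failure near the first gap by producing two preimages of $H$ that differ by a ``cyclic witness'' which can be slid to a higher vertex. For $k=2$ the prototype is the shifted triangle $H=\{12,13,23\}\subseteq\binom{[n]}{2}$ with $n\ge4$, whose preimages are exactly the $\binom n3$ triangles (this is the unique non-lex-initial shifted graph on four vertices, and $\beta_1=1$ together with the component count pins the fiber down); taking $B_1=H$, $B_2=\{12,14,24\}$ and $x=13$, each admissible $y\in\{14,24\}$ makes $(B_1\setminus x)\cup y$ a tree, whose shift is the lex-initial star and hence $\neq H$, so exchange fails. For $k=3$ the prototype is the tetrahedron boundary $H=\{123,124,134,234\}$, whose preimages are the boundaries $\partial\Delta^3$ on four of the $n$ vertices (pinned by $\beta_2=1$ and the $f$-vector), and the same slide-and-swap breaks exchange. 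To reach a general $H$ I would either (a) show matroidality is inherited under deletion of the top vertex and under links---using that exterior shifting is compatible with these operations---so that a non-lex-initial $H$ trims down to a prototype, or (b) run the sliding construction directly at the first gap of the given $H$.

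The main obstacle, in both (a) and (b), is certifying condition (ii) for the modified hypergraphs: one must show that after the swap the greedy lexicographic selection really returns a different shifted hypergraph. This is a statement about the generic ranks of initial lex-segments of the compound vectors $\{f_S\}$, which is strictly finer than topological data---two shifted graphs with equal $f$-vector and equal Betti numbers can have distinct preimage families---so the certification cannot be outsourced to Betti numbers and must track the actual generic dependencies. For $k=2$ these ranks have a transparent connectivity meaning and the argument closes; for $k=3$ the relevant ranks are the $2$-acyclicity/area-rigidity ranks underlying the known matroids above, and a bounded analysis of the few minimal non-lex-initial $3$-hypergraphs still suffices. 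That an extra combinatorial hypothesis becomes necessary for $k\ge4$ is precisely the signal that this initial-segment rank certification no longer reduces to a finite list of prototypes, which is why the unconditional theorem is confined to $k\le3$.
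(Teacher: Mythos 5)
Your ``if'' direction is sound and is essentially the paper's: your conditions (i)--(ii) are a dual reformulation of Kalai's construction $M'(H)$ via the matrix $X(B,H)$, and your observation that (ii) is automatic for a lex-initial $H$ is exactly why $M(H)=M'(H)$ there. The prototype computations (the triangle for $k=2$, the tetrahedron boundary for $k=3$, with the fiber pinned down by the $f$-vector and Betti numbers and exchange violated by a slid copy) are also correct and match the paper's Isolated Vertex Lemma in spirit.

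The ``only if'' direction, however, has a genuine gap: the reduction of an arbitrary non-lex-initial shifted $H$ to your prototypes is asserted, not proved, and the operations you propose do not accomplish it. Deletion of the top vertex can destroy non-lex-initiality outright: $H=\{12,13,14,23\}$ on $[5]$ is shifted and not lex-initial (it misses $15$ but contains $23$), yet deleting the isolated vertex $5$ yields a lex-initial, hence matroidal, graph on $[4]$; so ``trimming to a prototype'' fails already for $k=2$. The paper's reduction runs in the opposite direction: it shows non-matroidality is \emph{preserved upward} under coning and under adjoining $\mathrm{st}(1)$ (this is the content of the Cone and Reduction Lemmas, and proving that $\mathrm{st}(1)\cup G_i$ stays in the fiber of $H$ requires the cone and skeleton compatibility of shifting, not just Betti numbers), and this only reduces to the infinite family of $H$ with $\{2,\dots,k+1\}\in H$ and some $\{1,i_2,\dots,i_k\}\notin H$ --- not to a finite list. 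For $k=3$ your claim that ``a bounded analysis of the few minimal non-lex-initial $3$-hypergraphs suffices'' is unsubstantiated and is where the real work lies: e.g.\ $H=\{123,124,125,134,234\}$ on $[5]$ does not reduce to a tetrahedron prototype by any of your operations, and the paper must construct, for each such $H$, a permutation $\pi$ depending on the lex-maximum $m_H$ and then certify $\Delta((H\setminus e_1)\cup e_2)\ne H$ for every candidate $e_2$ via a case split (homology for some $e_2$, an induced-subhypergraph argument for others, and for the lexicographically large $e_2$ a nontrivial nonvanishing-determinant computation on a compound matrix --- precisely the ``generic rank certification'' you flag as the obstacle but do not supply). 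As written, the proposal proves the theorem only for the two prototype hypergraphs.
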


For $k\ge 4$ we prove the "only if" part for $k$-uniform hypergraphs satisfying some technical condition on their structure. 

\begin{theorem} \label{main theorem}
Let $k\ge 4$ and
let $H\subseteq\binom{[n]}{k}$ be a shifted $k$-uniform hypergraph such that $s_H\notin H$, where $s_H$ is defined in terms of the lex-maximum element of $H$ by \eqref{S for case 1}-\eqref{S for case 2}. Then $H$ is matroidal if and only if the hyperedges of $H$ form an initial lex-segment.
\end{theorem}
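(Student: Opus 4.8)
The ``if'' direction requires no new work: if the hyperedges of $H$ form an initial lex-segment then $H$ is matroidal by Kalai's result \cite[Section 4]{K Symm}, exactly as for $k=2,3$ in Theorem~\ref{thm: main thm 2 and 3}, and the hypothesis $s_H\notin H$ plays no role here. The substance is the ``only if'' direction, which I would establish by contraposition. Assume $H$ is shifted, $s_H\notin H$, and the hyperedges of $H$ do not form an initial lex-segment; I would then produce a violation of the basis-exchange axiom inside $M(H)$. Since exterior shifting preserves the $f$-vector, every member of $M(H)$ has exactly $|H|$ hyperedges, so the only basis axiom that can fail is exchange.

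Let $B$ be the lex-maximum element of $H$. Because $H$ is shifted but not an initial lex-segment, some $k$-set lying lex-below $B$ is absent from $H$; using closure under componentwise-down moves I would normalize this gap and express it through $B$, which is precisely the configuration encoded by the test set $s_H$ of \eqref{S for case 1}--\eqref{S for case 2}, and the hypothesis $s_H\notin H$ makes the obstruction live. Next I would produce two preimages that differ in \emph{at least two} hyperedges (a single-hyperedge difference can never violate exchange). The first is $H$ itself: exterior shifting fixes shifted hypergraphs, so $\Delta^e(H)=H$ and $H\in M(H)$. The second is a hypergraph $G$ obtained from $H$ by a controlled modification that inserts hyperedges witnessing the gap and deletes the same number of compensating ones. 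Certifying $\Delta^e(G)=H$ is the technical heart: since the shift is computed in generic coordinates, I would translate $\Delta^e(G)=H$ into nonvanishing and rank conditions on the relevant minors of the $k$-th compound matrix (equivalently, into Kalai-type exchange lemmas for exterior shifting) and verify that the modification preserves each such condition.

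With $B_1=H$ and $B_2=G$ differing in at least two hyperedges, I would single out an appropriate $x\in H\setminus G$ and prove that for every $y\in G\setminus H$ the hypergraph $(H\setminus\{x\})\cup\{y\}$ does not shift to $H$, i.e. $(H\setminus\{x\})\cup\{y\}\notin M(H)$. Because $|H\setminus G|\ge 2$, none of these one-edge swaps equals $G$, so this genuinely blocks every candidate repair rather than trivially reconstructing $B_2$. This is where $s_H\notin H$ is decisive: it is exactly the combinatorial fact ensuring that each such swap yields a hypergraph whose exterior shift contains $s_H$ (or the analogous set coming from the second case of its definition) and hence differs from $H$. Thus basis exchange fails for the pair $(H,G)$ at $x$, and $H$ is not matroidal.

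The main obstacle is the pair of verifications above: controlling the exterior shift of \emph{non}-shifted hypergraphs, which cannot be read off combinatorially as in the shifted case but only through genericity of the coordinates. I expect the delicate point to be the two-case structure of $s_H$ combined with the genuine use of $k\ge 4$: the situations $k=2,3$ are treated unconditionally in Theorem~\ref{thm: main thm 2 and 3}, whereas for $k\ge 4$ both the construction of $G$ and the blocking argument appear to require the extra hypothesis $s_H\notin H$, so the case analysis must rule out every candidate swap $y$ simultaneously.
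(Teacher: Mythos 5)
Your overall strategy is the right one --- exhibit two members of $M(H)$ and block every one-element exchange, using $s_H\notin H$ to kill the swaps --- but there is a genuine gap at the point you yourself flag as ``the technical heart.'' You propose to build the second preimage $G$ by an unspecified ``controlled modification'' of $H$ (inserting and deleting hyperedges) and then to certify $\Delta^e(G)=H$ through nonvanishing and rank conditions on minors of the compound matrix. You never say what the modification is, and verifying the exterior shift of a non-shifted hypergraph from scratch in this way is exactly the hard problem the paper avoids. The paper's construction sidesteps it entirely: it takes $G=\pi(H)$ for an explicitly defined permutation $\pi$ of $[n]$ (built from the block structure of $m_H$ in the two cases \eqref{eq: max in case 1} and \eqref{eq: max in case 2}), so that $\Delta^e(G)=\Delta^e(H)=H$ is immediate from the relabeling invariance of algebraic shifting (Theorem~\ref{Basic properties of algebraic shifting}(2)). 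Without this (or some equally concrete) construction, your argument has no second basis to exchange against, so the proof does not go through as written.

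A second, smaller issue is the blocking step. The paper splits the candidates $e_2\in\pi(H)\setminus H$ into $e_2<_{\text{lex}}m_H$, handled by Lemma~\ref{lemma: e_2<max}, and $e_2>_{\text{lex}}m_H$, handled by Lemma~\ref{lemma: compound matrix with s} with $S=s_H$; the latter requires first proving (Claim~2 in the paper) that every lex-large element of $\pi(H)$ has a rigid form differing from $s_H$ in exactly one element, which is what makes the compound-matrix determinant argument work. Your description --- that each swap ``yields a hypergraph whose exterior shift contains $s_H$'' --- is not what the lemma delivers (it gives $\Delta^e((H\setminus m_H)\cup e_2)<_{\text{lex}}H$, hence $\ne H$), and more importantly you give no mechanism for controlling the shape of the sets $e_2$ that must be ruled out. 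Both the choice of $\pi$ and the structure theorem for $\pi(H)\cap\{e:e\ge_{\text{lex}}m_H\}$ are essential ingredients missing from the proposal.
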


For the "only if" part of Theorem \ref{thm: main thm 2 and 3} we first make a reduction of the general problem to a certain restricted subset of hypergraphs; and for each of them we find a suitable permutation $\pi:[n]\rightarrow [n]$ such that the pair $(H, \pi(H))$, where $\pi(H)=\{\pi(T):\ T\in H\}$, violates the exchange axiom among the elements of $M(H)$. For the "only if" part of Theorem \ref{main theorem} we find a suitable such permutation $\pi=\pi(s_M)$ for each $H$.

The analog of Theorem~\ref{thm: main thm 2 and 3} 
for symmetric algebraic shifting is proved 
for $k=2$ in Proposition~\ref{thm: symmetric case for graphs}. 
For $H=\{ij:\ 1\le i\le j \le n, i\le k\}$ the corresponding matroid is the (generic) $k$-rigidity matroid, see e.g. \cite{Lee} and  \cite[Section 2.7]{K Alg Sh}, \cite[Section 3.2]{Nevo phd} for further background, references and for a discussion of this correspondence.
\

This paper is organized as follows. In Section \ref{sec: preliminaries} we give a background on algebraic shifting and matroids. In Section \ref{sec: Preparatory lemmas} we prove lemmas needed for proofs of Theorem \ref{thm: main thm 2 and 3} and Theorem \ref{main theorem}, and in Section \ref{sec: Proofs of main results} we prove these theorems. In Section \ref{sec: Matroids arising from symmetric shifting} we consider matroids arising from symmetric shifting, and in Proposition \ref{thm: symmetric case for graphs} classify all matroidal  
simple graphs 
w.r.t. symmetric shifting. In Section \ref{sec: Some partial results} we replace the matroidal property by a stronger algebraic property in Conjecture~\ref{conj:any_k} that allows us to prove the resulted "iff" combinatorial characterization.  
In Section \ref{sec: Concluding remarks} we conclude with related open problems.

\section{Preliminaries} \label{sec: preliminaries}
\subsection{Hypergraphs and simplicial complexes} \label{subsection: hypergraphs}
Let $[n]:=\{1,2,\ldots,n\}$. Denote by $\binom{[n]}{k}$ the set of subsets of $[n]$ of size $k$. A {\itshape simplicial complex} $K$ is a collection of subsets of $[n]$ which is closed under inclusion, i.e. $S\subseteq T\in K$ implies $S\in K$. The elements of $K$ are called {\itshape faces}. The {\itshape $i$-th skeleton} of $K$ is $K_i=\{S\in K:|S|=i+1\}$. Faces which belong to $K_i$ are {\itshape $i$-dimensional} faces. The $0$-dimensional faces are called {\itshape vertices} and the maximum faces with respect to inclusion are called {\itshape facets}. If all facets have the same dimension, $K$ is {\itshape pure}. 

The {\itshape $f$-vector} (face vector) of $K$ is $f(K)=(f_{-1},f_0,f_1,\ldots)$ where $f_i=|K_i|$. The {\itshape dimension} of $K$ is $\dim (K):=\max\{i:f_i(K)\ne0\}$. In particular, a $1$-dimensional simplicial complex is a {\itshape simple graph}, whose $1$-dimensional faces are called {\itshape edges}. 

The join of two simplicial complexes $K$ and $L$ with disjoint sets of vertices is the simplicial complex $K*L=\{S\cup T:S\in K,T\in L\}$. The {\itshape cone} over a simplicial complex $K$ is the join of $K$ with a single vertex, denoted by $\text{Cone}(K)$.

Denote by $\beta_i(K,\mathbb{L})$ the $i$-th Betti number of $K$ with coefficients in $\mathbb L$, or simply $\beta_i(K)$ if $\mathbb{L}$ was specified earlier. Denote by $\beta(K)=(\beta_0(K),\beta_1(K),\ldots)$ the {\itshape Betti vector} of $K$.

A simplicial complex $K$ is {\itshape shifted} if for every $i<j$, $j\in S\in K$, also $(S\setminus\{j\})\cup \{i\}\in K$. Let $<_p$ be the product partial order on equal sized ordered subsets of $\mathbb N$. That is, for $S=\{s_1<\ldots<s_k\}$ and $T=\{t_1<\ldots<t_k\}$ $S\le_p T$ iff $s_j\le t_j$ for every $1\le j\le k$. Then $K$ is shifted iff $S<_pT\in K$ implies $S\in K$. Every simplicial complex has a shifted simplicial complex with the same $f$-vector given by the {\itshape combinatorial shifting} $K\mapsto\Delta^c(K)$ introduced in \cite{EKR}. We briefly describe this construction: for some $1\le i<j\le n$ apply $S\mapsto (S\setminus\{j\})\cup\{i\}$ for all $j\in S\in K$, $i\notin S$ to obtain $K'$, which is also a simplicial complex, with the same $f$-vector. Repeat this process as long as possible to obtain a shifted simplicial complex which we denote $\Delta^c(K)$. Note that $\Delta^c(K)$ depends on the order of choices of pairs $i<j$. 

Note that a shifted simplicial complex $K$ is homotopy equivalent to a wedge of spheres, where the number of $i$-dimensional spheres in this wedge is $|\{S\in K_i:S\cup\{1\}\notin K\}|$. In particular, 
$\beta_i(K,\mathbb L)=|\{S\in K_i:S\cup\{1\}\notin K\}|$ for every field $\mathbb L$. 

A {\itshape $k$-uniform hypergraph} on $n$ vertices is a collection of subsets of $\binom{[n]}{k}$. In particular, 
$k$-uniform hypergraphs are in bijection with pure $(k-1)$-dimensional simplicial complexes, by closing down to subsets $H\mapsto K(H)$.
We denote by $H_i, f_i(H), \beta_i(H)$ the $i$-skeleton, number of $i$-faces, and $i$-th Betti number of the corresponding simplicial complex $K(H)$, resp.

Let $<_{\text{lex}}$ be the lexicographic order on equal sized subsets of $\mathbb N$, i.e. $S<_{\text{lex}}T$ iff $\min(S\triangle T)\in S$. Throughout the paper, we will use a convention that writing $\{i_1,i_2,\ldots,i_k\}$ means an ordered set, namely $i_1<i_2<\ldots<i_k$, unless specified otherwise.

\subsection{Algebraic shifting} \label{subsection: algebraic shifting}
The presentation in this section is based mainly on \cite{K Alg Sh, Nevo phd}.

\textbf{Exterior shifting.} 
Let $\mathbb F$ be a field and let $\mathbb L$ be a field extension of $\mathbb F$ of transcendental degree $\ge n^2$. (e.g. $\mathbb F=\mathbb Q$ and $\mathbb L=\mathbb R$). Let $V$ be an $n$-dimensional vector space over $\mathbb L$ with a basis $\{e_1,\ldots,e_n\}$. Let $\bigwedge V$ be the graded exterior algebra over $V$. Denote $e_S=e_{s_1}\wedge\ldots\wedge e_{s_k}$ where $S=\{s_1<\ldots<s_k\}$. Then $\{e_S:S\in\binom{[n]}{k}\}$ is a basis for $\bigwedge^kV$, the {\itshape $k$-th exterior power} of $V$. Note that as $K$ is a simplicial complex, the ideal $(e_S:S\notin K)$ of $\bigwedge V$ and the vector subspace $\text{span}\{e_S:S\notin K\}$ of $\bigwedge V$ consist of the same set of elements in $\bigwedge V$. Define the exterior algebra of $K$ by

$$\bigwedge K=(\bigwedge V)/(e_S:S\notin K).$$

Let $\{f_1,\ldots,f_n\}$ be a basis of $V$, generic over $\mathbb F$ with respect to $\{e_1,\ldots,e_n\}$, which means that the entries of the corresponding transition matrix $X$ ($e_iX=f_i$ for all $i$) are algebraically independent over $\mathbb F$. Let $\tilde{f}_S$ be the image of $f_S\in\bigwedge V$ in $\bigwedge(K)$. 
Define 
$$\Delta^e(K)=\{S:\tilde{f}_S\notin\text{span}\{\tilde{f}_{S'}:S'<_{\text{lex}}S\}\}$$
to be the {\itshape exterior shifting} of $K$.

Here is an equivalent definition of exterior shifting. Let $X=(x_{ij})_{1\le i\le n, 1\le j\le n}$ be an $n$ by $n$ matrix over $\mathbb L$ which is generic over $\mathbb F$, i.e. a matrix whose entries are algebraically independent over 
$\mathbb F$. Let $X\left(\binom{[n]}{k};\binom{[n]}{k}\right)$ be the {\itshape $k$-th compound matrix} of $X$, i.e., the $\binom{n}{k}$ by $\binom{n}{k}$ matrix of $k$ by $k$ minors of $X$. Assume that the rows and the columns of $X\left(\binom{[n]}{k};\binom{[n]}{k}\right)$ are ordered lexicographically.

Given a collection $H$ of $k$-subsets of $[n]$ with $|H|=m$, let $X\left(H;\binom{[n]}{k}\right)$ be the $m$ by $\binom{n}{k}$ submatrix of $X\left(\binom{[n]}{k};\binom{[n]}{k}\right)$ whose rows correspond to the $k$-sets in $H$. Now choose a basis of columns for the column-space of $X\left(H;\binom{[n]}{k}\right)$ in the greedy way w.r.t. the lexicographic order: by simply taking those columns which are not spanned by previous columns in the lexicographic order. Starting from a generic over $\mathbb F$ matrix $X$ define $\Delta^e(H)$ to be the family of sets which are the indices of the chosen columns. For a simplicial complex $K$ define $\Delta(K)=\cup_i\Delta(K_{i})$, where $K_{i}$ is an $i$-dimensional skeleton of $K$.

The equivalence of the definitions follows from the fact that $f_S=\sum X_{ST}e_T$, where $X_{ST}$ is the $k$ by $k$ minor of $X$ with rows and columns corresponding to the elements of $S$ and $T$ respectively. 

\textbf{Symmetric shifting.} Let $K$ be a simplicial complex and let $R(K)$ be its Stanley-Reisner ring (face ring):
$$R(K)=\mathbb L[x_1,\ldots,x_n]/I_K,$$
where $I_K$ is the homogeneous ideal generated by the monomials whose support is not in $K$, i.e. monomials $x_{i_1}\cdot x_{i_2}\cdot \ldots\cdot x_{i_r}$ where $\{i_1,i_2,\ldots,i_r\}\notin K$. Let $y_1,\ldots,y_n$ be generic over $\mathbb F$ linear combinations of $x_1,\ldots,x_n$. 
All monomials in $y_i$'s span $R(K)$. We now construct the basis $\text{GIN}(K)$ of monomials in the variables $y_i$ in the greedy way w.r.t. the lexicographic order. That is, a monomial $m$ belongs to $\text{GIN}(K)$ if and only if its image $\tilde{m}$ in $R(K)$ is not a linear combination of images of monomials which are lexicographically smaller, i.e.
$$\text{GIN}(K)=\{m:\tilde{m}\notin\text{span}_k\{\tilde{m'}:\deg(m')=deg(m),m'<_{\text{lex}}m\}\},$$
where $\prod\limits_{i=1}^n y_i^{a_i}<_{\text{lex}}\prod\limits_{i=1}^n y_i^{b_i}$ iff for $j=\min\{i:a_i\ne b_i\}\ a_j>b_j$. 

For example,
$y_1^2<_{\text{lex}}y_1y_2<_{\text{lex}}y_1y_3<_{\text{lex}}\ldots<_{\text{lex}}y_1y_n<_{\text{lex}}y_2^2<_{\text{lex}}\ldots$.

The combinatorial information in $\text{GIN}(K)$ is redundant: if $m\in \text{GIN}(K)$ is of degree $i\le \dim(K)$ then $y_1m,\ldots,y_im$ are also in $\text{GIN}(K)$. Thus, $\text{GIN}(K)$ can be reconstructed from its monomials of the form $m=y_{i_1}\cdot y_{i_2}\cdot\ldots\cdot y_{i_r}$ where $r\le i_1\le i_2\le\ldots\le i_r$, $r\le \dim(K)+1$. Denote this set by $gin(K)$, and associate the set $S_{un}(m)=\{i_1-r+1,i_2-r+2,\ldots,i_r\}$ with such a monomial $m$. Denote 
$$\Delta^s(K)=\cup\{S_{un}(m):m\in \text{gin}(K)\}$$
to be the {\itshape symmetric shifting} of $K$.

Given a simplicial complex $K$, we will use the notation $\Delta(K)$ in the cases where a statement is satisfied for both $\Delta^e(K)$ and $\Delta^s(K)$.

Both exterior and symmetric shiftings are canonical in the sense that they do not depend on the choice of a generic basis and for a permutation $\pi:[n]\to[n]$ the induced simplicial complex $K$ satisfies $\Delta(\pi(K))=\Delta(K)$. The following Theorem summarizes other properties of algebraic shifting (from now on we will consider Betti numbers with coefficients in $\mathbb L$):

\begin{theorem} \textup{\cite[Theorems 2.1, 2.2 and 3.2]{K Alg Sh}} \label{Basic properties of algebraic shifting}
Let $K$ and $L$ be simplicial complexes, and let $\Delta$ denote algebraic shifting (exterior or symmetric). Then:
\begin{enumerate}
    \item $\Delta(K)$ is a simplicial complex.
    \item $\Delta(K)=\Delta(L)$ for $L$ combinatorially isomorphic to $K$, see \textup{\cite[Claim 1]{BK}}.
    \item $f(K)=f(\Delta(K))$, see \textup{\cite[Theorem 3.1]{BK}}.
    \item $\beta(K)=\beta(\Delta(K))$, see \textup{\cite[Theorem 3.1]{BK}}.
    \item $\Delta(K)$ is shifted, see \textup{\cite[Theorem 3.1]{BK}, \cite[Remark (4)]{K Symm}}.
    \item If $K$ is shifted then $\Delta(K)=K$, see \textup{\cite[Claim 1]{BK}, \cite[Remark (4)]{K Symm}}.
    \item If $L\subseteq K$, then $\Delta(L)\subseteq \Delta(K)$.
    \item $\Delta(Cone(K))=Cone(\Delta(K))$, see \textup{ \cite[Corollary 5.4]{Nevo 2005}}, \textup{\cite[Lemma 3.3]{BNT}}.
    \item $\Delta(K)$ depends only on the characteristic of $\mathbb F$.
\end{enumerate}
\end{theorem}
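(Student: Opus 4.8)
The plan is to run the exterior and symmetric cases on parallel tracks, in each case first isolating a single genericity lemma and then deducing all nine items from it. For exterior shifting the lemma I would prove is that for a matrix $X$ generic over $\mathbb F$ the images $\tilde f_S$ in $\bigwedge^k K$ are ``as independent as possible'': an $\mathbb L$-linear relation among them holds if and only if it holds for a purely combinatorial reason, so that every rank I compute attains its generic (maximal) value and is independent of the particular generic choice. The analogous statement in the symmetric case is that the greedy lex monomial basis of $R(K)$ is governed only by which generic polynomials vanish. The canonicity already recorded before the theorem (independence of the generic basis and $\Delta(\pi(K)) = \Delta(K)$), hence item (2), then falls out immediately, since $XP$ is generic whenever $X$ is and $P$ is a permutation matrix, and a permutation can be absorbed into the generic change of coordinates.

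Granting the genericity lemma, several items are short. For item (3) the images $\{\tilde f_S : S \in \binom{[n]}{k}\}$ span $\bigwedge^k K$, whose dimension equals $f_{k-1}(K)$; the greedy lex-selection therefore returns a basis of exactly that size, so $|\Delta^e(K)_{k-1}| = f_{k-1}(K)$, and in the symmetric case the same count comes from preservation of the Hilbert function when passing to an initial ideal. Item (7) is monotonicity of the row-set of the compound matrix $X(K; \binom{[n]}{k})$: deleting rows can only shrink the greedily chosen column-index family. Item (8) follows by factoring the extra cone-vertex out of the exterior algebra (respectively the face ring), so that shifting a cone returns the cone of the shifting. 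Item (9) holds because whether a given minor, or a given linear combination of minors, vanishes is governed by a polynomial identity with integer coefficients, whose truth depends only on $\mathrm{char}(\mathbb F)$.

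The technical core is items (1) and (5). For item (1) the non-faces of $\Delta^e(K)$ index the lex-initial ideal $\mathrm{in}(J)$ of the kernel $J = (e_S : S \notin K)$ of $\bigwedge V \to \bigwedge K$; since $\mathrm{in}(J)$ is again an ideal---closed under wedging by each $e_i$---its complement is downward closed, so the faces of $\Delta^e(K)$ form a simplicial complex, and the same argument applies to the generic initial monomial ideal $\mathrm{gin}(I_K)$ in the symmetric case. For item (5) I would show this initial ideal is moreover \emph{strongly stable}: in the exterior algebra the generic initial ideal is Borel-fixed over every field, and in the polynomial ring $\mathrm{gin}(I_K)$ is Borel-fixed, hence strongly stable in characteristic zero. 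Strong stability of the initial ideal translates, via the correspondence $m \mapsto S_{\mathrm{un}}(m)$, exactly into closure of $\Delta(K)$ under the shift moves $(S \setminus \{j\}) \cup \{i\}$. Item (6) then follows from a direct genericity computation showing that for shifted $K$ the greedy lex procedure selects precisely the faces of $K$.

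For item (4) I would express each reduced Betti number $\beta_i$ through the ranks of the simplicial boundary maps, transported to the generic basis of $\bigwedge K$ (respectively of $R(K)$), and invoke the genericity lemma to identify these ranks with their generic values, which are computed identically for $K$ and for $\Delta(K)$; together with the wedge-of-spheres formula $\beta_i(\Delta(K)) = |\{S \in \Delta(K)_i : S \cup \{1\} \notin \Delta(K)\}|$ recorded above for shifted complexes, this yields $\beta(K) = \beta(\Delta(K))$. The step I expect to be the main obstacle is establishing strong stability and homological invariance simultaneously with the genericity lemma---that is, proving that a single generic change of coordinates realizes the initial (Borel-fixed) position in all degrees at once, and that the boundary-map ranks are genuinely the generic ones rather than merely bounded by them. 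The characteristic subtlety lives here as well: Borel-fixed ideals are strongly stable only in characteristic zero, which is exactly why the symmetric case is restricted to $\mathrm{char}(\mathbb F)=0$, whereas the exterior initial ideal is strongly stable over every field. For this core I would follow the cited arguments of \cite{K Alg Sh, BK}.
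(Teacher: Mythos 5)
The paper offers no proof of this theorem at all: it is quoted as background, with every item attributed to the literature (\cite{K Alg Sh}, \cite{BK}, \cite{K Symm}, \cite{Nevo 2005}, \cite{BNT}). So the comparison is really with those cited proofs, and your outline does follow essentially the same route they take: interpreting the greedy lex selection via a generic initial ideal, so that the non-selected sets are closed under multiplication and item (1) follows; Borel-fixedness/strong stability for item (5), including the correct observation that the characteristic restriction enters only on the symmetric side (squares vanish in the exterior algebra, so the elementary moves $e_j\mapsto e_j+e_i$ cause no characteristic-$p$ obstruction there, while $\mathrm{gin}(I_K)$ is strongly stable only in characteristic zero); absorbing a permutation matrix into the generic change of coordinates for item (2); dimension counting for item (3); row-monotonicity of the greedy column selection for item (7); and integrality of the relevant polynomial identities for item (9). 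All of that is consistent with how the cited sources argue.

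Where your proposal falls short of a proof is exactly where the cited results are deep. Item (4) is Bj\"orner--Kalai's extended Euler--Poincar\'e theorem \cite{BK}: saying you will ``transport the boundary maps to the generic basis and invoke the genericity lemma'' does not yet produce the identification $\beta_i(K)=\beta_i(\Delta(K))$; the real argument computes homology through the action of a generic element (exterior multiplication by $f_1$, or a generic linear form in the symmetric case) and requires identifying kernels and images exactly, not merely asserting that ranks take generic values --- rank semicontinuity alone gives inequalities, not equalities, and nothing in your sketch pins down why the generic rank computes a homological invariant of $K$. Likewise item (8) is not a one-line ``factor out the cone vertex'': it is the content of \cite[Corollary 5.4]{Nevo 2005} and \cite[Lemma 3.3]{BNT}, and the difficulty (which your sketch does not engage) is that the cone vertex must be made compatible with the generic basis and the lex order simultaneously, which is why this property has its own publications. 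So your plan is a faithful map of the standard arguments, with items (1)--(3), (5)--(7), (9) sketched correctly, but items (4) and (8) remain placeholders that defer precisely the hard content to the references --- which is, in fairness, also what the paper itself does.
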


\subsection{Matroids}
Throughout this section we follow \cite{Oxley}.

\begin{definition}
    A {\itshape matroid} $M$ (on $E$) is an ordered pair $(E,\mathcal{I})$ consisting of a finite set $E$ and a collection $\mathcal{I}$ of subsets of $E$ having the following three properties:
    \begin{enumerate}
        \item[\bf{(I1)}] $\emptyset\in\mathcal{I}$.
        \item[\bf{(I2)}] If $I\in\mathcal{I}$ and $I'\subseteq I$, then $I'\in\mathcal{I}$.
        \item[\bf{(I3)}] If $I_1$ and $I_2$ are in $\mathcal{I}$ and $|I_1|<|I_2|$, then there is an element $e$ of $I_2\setminus I_1$ such that $I_1\cup e\in\mathcal{I}$.
    \end{enumerate}
\end{definition}
The members of $\mathcal{I}$ are the {\itshape independent sets} of $M$, and $E$ is the {\itshape ground set} of $M$. The maximal independent sets are called {\itshape bases}.
Here is an equivalent definition of a matroid in terms of bases:
\begin{definition}
    A {\itshape matroid} $M$ is an ordered pair $(E,\mathcal{B})$ consisting of a finite set $E$ and a collection $\mathcal{B}$ of subsets of $E$ having the following two properties:
    \begin{enumerate}
        \item[\bf{(B1)}] $\mathcal{B}$ is non-empty.
        \item[\bf{(B2)}] If $B_1$ and $B_2$ are members of $\mathcal{B}$ and $e_1\in B_1\setminus B_2$, then there is an element $e_2\in B_2\setminus B_1$ such that $(B_1\setminus e_1)\cup e_2\in \mathcal{B}$.
    \end{enumerate}
    The property \textbf{(B2)} is called the {\itshape exchange axiom}.
\end{definition}
\begin{example}
    Any finite collection of vectors is a ground set for a matroid whose independent sets are linearly independent subsets of this collection and bases are linear bases of the linear span of this collection.
\end{example}

\section{Matroids arising from exterior shifting} \label{sec: Matroids arising from exterior shifting}
Let $H$ be a shifted $k$-uniform hypergraph on $[n]$. 
Denote by
$$M(H)=\left\{G\subseteq \binom{[n]}k|\ \Delta^e(G)=H\right\}$$
the set of all $k$-uniform hypergraphs on $[n]$ whose exterior shifting equals $H$.
\begin{definition}
Let $H$ be a shifted $k$-uniform hypergraph on $[n]$. $H$ is {\itshape matroidal} if $M(H)$ is the set of bases of a matroid on $\binom{[n]}{k}$. 
\end{definition}

\textbf{Kalai's construction:} Let $X=(x_{ij})_{1\le i\le n,1\le j\le n}$ be an $n\times n$-matrix over $\mathbb L$.

For $S,T\in\binom{[n]}{k}$ let $X_{ST}$ be the minor which corresponds to the $k$ rows of $X$ indexed by the elements of $S$ and the $k$ columns of $X$ indexed by the elements of $T$. That is, if $S=\{i_1,\ldots,i_k\}$ and $T=\{j_1,\ldots,j_k\}$, then $X_{ST}=\det(x_{i_sj_t})_{1\le s\le k, 1\le t\le k}$.

Let $B$ and $H$ be subsets of $\binom{[n]}{k}$. Denote by $X(B,H)$ the matrix $(X_{ST})_{S\in B,T\in H}$. Here, the rows and the columns of $X(B,H)$ are ordered by the lexicographic order on $\binom{[n]}{k}$. 

The following definition appeared in \cite[Section 4]{K Symm} and allows to construct a symmetric matroid for any given $H\subseteq\binom{[n]}k$.

Let $X=(x_{ij})_{1\le i,j\le n}$ be an $n\times n$-matrix over $\mathbb L$ which is generic
over $\mathbb F$. For a $k$-uniform hypergraph $H\subseteq\binom{[n]}k$ define a matroid $M'(H)$ on $\binom{[n]}k$ as follows: $B\subseteq\binom{[n]}k$ is independent in $M'(H)$ iff the rows of $X(B,H)$ are linearly independent. 
\begin{remark} \cite[Section 4, Remark (2)]{K Symm} \label{Remark: Kalai's construction for hypperconnectivity}
 For $H=\{S\subseteq\binom{[n]}2:S\cap[k]\ne\emptyset\},\ M'(H)$  is the $k$-hyperconnectivity matroid.  
\end{remark}
In fact, a more general statement is straightforward from the construction above. We call a subset $H\subseteq\binom{[n]}{k}$ an {\itshape initial lex-segment} 
if $H=\left\{T'\in\binom{[n]}{k}|\ T'\le_{\text{lex}}T\right\}$ for some $T\in\binom{[n]}{k}$.
\begin{corollary}
\label{Initial segment is a matroid}
 Let $H\subseteq\binom{[n]}k$ be a $k$-uniform hypergraph whose hyperedges form an initial lex-segment. Then $H$ is matroidal.
\end{corollary}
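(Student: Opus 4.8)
The plan is to identify the set $M(H)$ explicitly with the basis set of Kalai's matroid $M'(H)$, using crucially that $H$ is an initial lex-segment. Throughout write $m=|H|$. First I would record the linear-algebra backbone. Since $X$ is generic over $\mathbb F$ it is invertible, and hence so is its $k$-th compound matrix $X\!\left(\binom{[n]}{k},\binom{[n]}{k}\right)$; in particular every subfamily of its rows, and every subfamily of its columns, is linearly independent. Two consequences are all I need: for any $G\subseteq\binom{[n]}{k}$ the matrix $X\!\left(G,\binom{[n]}{k}\right)$ has full row rank $|G|$, and the matrix $X\!\left(\binom{[n]}{k},H\right)$ has full column rank $m$. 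These turn the combinatorial definitions into statements about square invertibility.

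Next I would characterize membership in $M(H)$. By the greedy column-selection definition, $\Delta^e(G)$ is the lex-greedy column basis of $X\!\left(G,\binom{[n]}{k}\right)$, so its cardinality equals the rank of that matrix, namely $|G|$ (equivalently, this is $f$-vector preservation, item (3) of Theorem~\ref{Basic properties of algebraic shifting}). Thus $\Delta^e(G)=H$ already forces $|G|=m$. Here is the point where the hypothesis enters: because $H$ is an initial lex-segment, its $m$ elements are exactly the $m$ lex-smallest members of $\binom{[n]}{k}$, i.e. the first $m$ columns. The greedy algorithm selects precisely these iff the first $m$ columns are linearly independent, i.e. iff the square matrix $X(G,H)$ is nonsingular. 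Hence
\[
M(H)=\left\{G\subseteq\binom{[n]}{k}\ :\ |G|=m \text{ and } X(G,H)\text{ is invertible}\right\}.
\]

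Finally I would match this with $M'(H)$. In Kalai's matroid, $B$ is independent iff the rows of $X(B,H)$ are linearly independent; since $X\!\left(\binom{[n]}{k},H\right)$ has rank $m$, the bases of $M'(H)$ are exactly the $m$-element subsets $B$ for which $X(B,H)$ is an invertible $m\times m$ matrix. This is precisely the displayed description of $M(H)$. Therefore $M(H)$ is the collection of bases of the matroid $M'(H)$, and so $H$ is matroidal, which is the assertion. (Nonemptiness of $M(H)$ is automatic, since an initial lex-segment is $<_p$-downward closed, hence shifted, so $\Delta^e(H)=H$ by item (6) of Theorem~\ref{Basic properties of algebraic shifting} and thus $H\in M(H)$.)

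The only genuinely delicate step I expect is the equivalence $\Delta^e(G)=H \iff X(G,H)$ nonsingular. It is exactly here that the initial-lex-segment hypothesis is indispensable: for a general target $H$ the greedy procedure imposes both that the columns indexed by $H$ be independent and that every earlier \emph{unselected} column be spanned by its predecessors, and this does not collapse to a single determinantal condition. Once $H$ is an initial lex-segment the second requirement is vacuous, which is what makes the reduction to Kalai's $M'(H)$ immediate.
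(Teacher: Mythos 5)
Your proposal is correct and takes essentially the same route as the paper: both identify $M(H)$ with the set of bases of Kalai's matroid $M'(H)$, using the initial-lex-segment hypothesis to force the lex-greedy column basis of $X\left(G,\binom{[n]}{k}\right)$ to coincide with $H$ exactly when $X(G,H)$ is nonsingular. Your write-up merely makes the genericity and full-rank bookkeeping (and the nonemptiness of $M(H)$) more explicit than the paper does.
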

\begin{proof}
We will verify that $M(H)$ is the set of bases of the matroid $M(H')$, or $M(H)=M'(H)$ for short.
Let $B\in M(H)$, then $H$ is a column basis chosen in the greedy way for the matrix $X(B,\binom{[n]}{k})$. Thus, the columns of the matrix $X(B,H)$ are linearly independent, hence $B\in M'(H)$.
 
For a basis $B\in M'(H)$ let us compute the exterior shifting of $B$: as the columns in $X(B,H)$ are linearly independent, and as for exterior shifting the basis of columns in the compound matrix is chosen in the greedy way,  
$\Delta^e(B)$ is given by some set $G$ corresponding to $|H|$ columns lexicographically smaller than or equal to $H$. However, since $H$ is an initial lex-segment, we conclude that this set $G$ coincides with $H$, namely  $\Delta^e(B)=H$. 
Thus, $M(H)=M'(H)$ as desired.
\end{proof}

\subsection{Preparatory lemmas} \label{sec: Preparatory lemmas}
In this section we present a series of lemmas and reductions which we will use in proofs of Theorems \ref{thm: main thm 2 and 3} and \ref{main theorem}. 
In order to show that a shifted but not initial lex-segment $k$-uniform hypergraph $H$, satisfying some \emph{extra condition}, is not matroidal, we will choose a permutation $\pi$ on $[n]$ and an element $e_1\in H$ for which we then show that for every $e_2\in\pi(H)\setminus H$ holds $\Delta(H\setminus e_1)\cup e_2)\ne H$. Thus, using  Theorem~\ref{Basic properties of algebraic shifting}(2), we will conclude that $M(H)$ violates the exchange axiom.

Here is the first such \emph{extra condition}, giving the first reduction.

\begin{lemma}[Isolated vertex] \label{lemma: partial star with an additional edge 2,3 is not a matroid}
 Let $H\subseteq\binom{[n]}{k}$ be a shifted $k$-uniform hypergraph such that $n$ is an isolated vertex and $\{2,3,\ldots,k+1\}\in H$. 
 Then $H$ is not matroidal. 
\end{lemma}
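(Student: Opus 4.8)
The plan is to exhibit a single failure of the basis-exchange axiom \textbf{(B2)} inside $M(H)$. Since $H$ is shifted, Theorem~\ref{Basic properties of algebraic shifting}(6) gives $\Delta^e(H)=H$, so $H\in M(H)$; and since exterior shifting is invariant under relabelling the ground set (Theorem~\ref{Basic properties of algebraic shifting}(2) together with the canonicity noted before that theorem), for the transposition $\pi=(1\,n)$ we also have $\Delta^e(\pi(H))=\Delta^e(H)=H$, i.e. $\pi(H)\in M(H)$. Because $n$ is isolated, $\pi$ replaces the label $1$ by $n$ in every hyperedge containing $1$ and fixes every hyperedge not containing $1$; hence $H\setminus\pi(H)$ is exactly the set of hyperedges of $H$ containing $1$, while $\pi(H)\setminus H=\{(e\setminus\{1\})\cup\{n\}:e\in H,\ 1\in e\}$. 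I will take $B_1=H$, $B_2=\pi(H)$ and the distinguished element $e_1=\{1,2,\dots,k\}\in H\setminus\pi(H)$ (which lies in $H$ by shiftedness), and show that no $e_2\in\pi(H)\setminus H$ satisfies $\Delta^e((H\setminus e_1)\cup e_2)=H$. This contradicts \textbf{(B2)} and proves $H$ is not matroidal.

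The invariant I will use is the top Betti number. For a $k$-uniform hypergraph $G$ the complex $K(G)$ is pure $(k-1)$-dimensional, so $\beta_{k-1}(K(G))$ equals the dimension of the top cycle space $\ker\partial_{k-1}$, i.e. $|G|$ minus the rank of $G$ in the simplicial (Crapo--Rota) matroid on $\binom{[n]}{k}$. Since $\Delta^e$ preserves Betti numbers (Theorem~\ref{Basic properties of algebraic shifting}(4)) and $H$ is shifted, $\beta_{k-1}(H)$ equals the number $b$ of hyperedges of $H$ not containing $1$. Thus it suffices to prove $\beta_{k-1}((H\setminus e_1)\cup e_2)\ne b$ for every admissible $e_2$, and I will do this by showing that this value is always $b-1$.

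This reduces to two rank computations in the simplicial matroid, organised by the identity $\beta_{k-1}((H\setminus e_1)\cup e_2)=\beta_{k-1}(H\setminus e_1)+1-\mathbf{1}[e_2\notin\operatorname{span}(H\setminus e_1)]$, which follows from $\beta_{k-1}=|\cdot|-\operatorname{rank}$. First, $\beta_{k-1}(H\setminus e_1)=b-1$: this is where the hypothesis $\{2,\dots,k+1\}\in H$ enters, since the boundary of the $(k+1)$-set $\{1,2,\dots,k+1\}$ is a nonzero cycle supported entirely on $H$ (its faces are $\le_p\{2,\dots,k+1\}$, hence in $H$ by shiftedness) in which $e_1$ occurs with nonzero coefficient; therefore $e_1\in\operatorname{span}(H\setminus e_1)$, and deleting $e_1$ drops the top cycle space by exactly one. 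Second, for every $e_2=(e\setminus\{1\})\cup\{n\}\in\pi(H)\setminus H$ I claim $e_2\notin\operatorname{span}(H\setminus e_1)$: indeed $e_2$ contains $n$ whereas every hyperedge of $H$, hence of $H\setminus e_1$, avoids $n$ because $n$ is isolated, and a $k$-set containing $n$ cannot lie on a cycle whose remaining support avoids $n$ — in any such putative cycle the boundary coefficient of a $(k-1)$-face of $e_2$ containing $n$ would be forced to be nonzero, contradicting the cycle condition. Combining the two gives $\beta_{k-1}((H\setminus e_1)\cup e_2)=(b-1)+1-1=b-1\ne b$ for all admissible $e_2$, as required.

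The main obstacle is the first rank computation: one must confirm that removing the star edge $e_1$ \emph{genuinely} lowers the cycle-space dimension rather than leaving it unchanged, and this is exactly the role of the combinatorial hypothesis $\{2,\dots,k+1\}\in H$, which manufactures a cycle of $H$ passing through $e_1$. The isolatedness of $n$ then does the complementary work in the second computation, guaranteeing that no admissible replacement $e_2$ can restore the lost cycle. With both facts in hand, the failure of \textbf{(B2)} for $B_1=H$, $B_2=\pi(H)$ and $e_1$ is immediate from $H,\pi(H)\in M(H)$ and $e_1\in H\setminus\pi(H)$, so $H$ is not matroidal.
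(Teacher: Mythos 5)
Your proposal is correct and follows essentially the same route as the paper: both take the pair $H,\pi(H)$ for the transposition $\pi=(1\,n)$, choose $e_1$ a facet of $\partial\{1,\ldots,k+1\}$ containing $1$, and show via preservation of Betti numbers that deleting $e_1$ destroys the top homology cycle supplied by $\{2,\ldots,k+1\}\in H$ while no replacement $e_2$ (necessarily containing the isolated vertex $n$) can restore it. Your version is merely a more quantitative rendering of the same argument, phrased through ranks in the simplicial matroid.
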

\begin{proof}
    Consider two $k$-uniform hypergraphs $H_1=H$ and $H_2$ obtained from $H$ by flipping the vertex $1$ with the vertex $n$ (see Fig. \ref{graphs which do not satisfy the exchange exiom} for illustration). By its structure $H_1$ contains the facets of the boundary complex $\partial \sigma$ of the simplex $\sigma=\{1,2,\ldots,k+1\}$. In order to show that the pair $H_1, H_2$ violates the exchange axiom we have to find $e_1\in H_1\setminus H_2$ such that for every $e_2\in H_2\setminus H_1$ we have that
    $(H_1\setminus e_1)\cup e_2\notin M(H)$. Let us take as $e_1$ an arbitrary facet of 
    $\partial \sigma$
    which contains $1$. Such $e_1$ indeed lies in $H_1\setminus H_2$ since $1$ is an isolated vertex in $H_2$. Note that $\beta_{k-1}(H_1\setminus e_1)<\beta_{k-1}(H_1)$. As $e_2$ we can take only hyperedges containing the vertex $n$, which is an isolated vertex in $H_1$. Thus, $e_2$ 
    does not belong to any $(k-1)$-homology cycle in $(H_1\setminus e_1)\cup e_2$. Thus, $\beta_{k-1}((H_1\setminus e_1)\cup e_2)=\beta_{k-1}(H_1\setminus e_1)<\beta_{k-1}(H_1)$. However, since algebraic shifting preserves Betti numbers, see Theorem \ref{Basic properties of algebraic shifting}(4), we have $\beta_{k-1}(\Delta((H_1\setminus e_1)\cup e_2))=\beta_{k-1}((H_1\setminus e_1)\cup e_2)=\beta_{k-1}(H_1\setminus e_1)<\beta_{k-1}(H_1)=\beta_{k-1}(H)$. Thus, $((H_1\setminus e_1)\cup e_2)\notin M(H)$.
\end{proof}

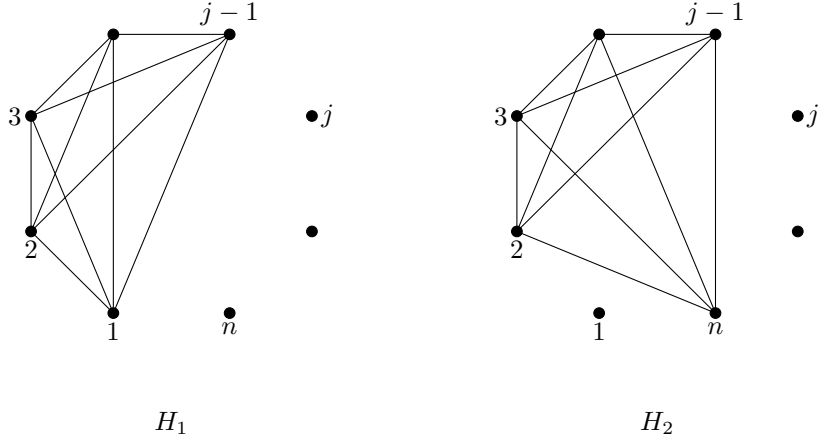
\begin{figure}
\centering
\begin{tikzpicture}
    \filldraw (22.5:2) node [right] {$j$} circle (2pt);
    \filldraw (67.5:2) node [above] {$j-1$} circle (2pt);
    \filldraw (112.5:2) node [above] {} circle (2pt);
    \filldraw (157.5:2) node [left] {$3$} circle (2pt);
    \filldraw (-22.5:2) node [right] {} circle (2pt);
    \filldraw (-67.5:2) node [below] {$n$} circle (2pt);
    \filldraw (-157.5:2) node [below] {$2$} circle (2pt);
    \filldraw (-112.5:2) node [below] {$1$} circle (2pt);

    \draw (-112.5:2) -- node [above] {}(-157.5:2);
    \draw (-112.5:2) -- node [above] {}(157.5:2);
    \draw (-112.5:2) -- node [above] {}(112.5:2);
    \draw (-112.5:2) -- node [above] {}(67.5:2);
    \draw (-157.5:2) -- node [above] {}(157.5:2);
    \draw (-157.5:2) -- node [above] {}(112.5:2);
    \draw (-157.5:2) -- node [above] {}(67.5:2);
    \draw (157.5:2) -- node [above] {}(112.5:2);
    \draw (157.5:2) -- node [above] {}(67.5:2);
    \draw (112.5:2) -- node [above] {}(67.5:2);
    
    \draw (0,-3.3) node {$H_1$};
\end{tikzpicture}\phantom{aaaaaaaaaa}
\begin{tikzpicture}

    \filldraw (22.5:2) node [right] {$j$} circle (2pt);
    \filldraw (67.5:2) node [above] {$j-1$} circle (2pt);
    \filldraw (112.5:2) node [above] {} circle (2pt);
    \filldraw (157.5:2) node [left] {$3$} circle (2pt);
    \filldraw (-22.5:2) node [right] {} circle (2pt);
    \filldraw (-67.5:2) node [below] {$n$} circle (2pt);
    \filldraw (-157.5:2) node [below] {$2$} circle (2pt);
    \filldraw (-112.5:2) node [below] {$1$} circle (2pt);

    \draw (-67.5:2) -- node [above] {}(-157.5:2);
    \draw (-67.5:2) -- node [above] {}(157.5:2);
    \draw (-67.5:2) -- node [above] {}(112.5:2);
    \draw (-67.5:2) -- node [above] {}(67.5:2);
    \draw (-157.5:2) -- node [above] {}(157.5:2);
    \draw (-157.5:2) -- node [above] {}(112.5:2);
    \draw (-157.5:2) -- node [above] {}(67.5:2);
    \draw (157.5:2) -- node [above] {}(112.5:2);
    \draw (157.5:2) -- node [above] {}(67.5:2);
    \draw (112.5:2) -- node [above] {}(67.5:2);

    \draw (0,-3.3) node {$H_2$};
\end{tikzpicture}
\caption{The graphs $H_1$ and $H_2$ violate the exchange axiom}
    \label{graphs which do not satisfy the exchange exiom}
\end{figure}
For a shifted $k$-uniform hypergraph $H$ on $[n]$ let its {\itshape cone} be the $(k+1)$-uniform hypergraph  $\text{Cone}(H)=\{\{0\}\cup F:\ F\in H\}$ where $0<1$ for the definition of the lexicographic order over the larger vertex set $\{0\}\cup [n]$.

\begin{lemma}[Cone] \label{coning preserve property to not be a matroid}
    Let $H\subseteq\binom{[n]}{k}$ be a shifted $k$-uniform hypergraph. Suppose that $H$ is not matroidal. Then the $(k+1)$-uniform shifted hypergraph $\text{Cone}(H)\subseteq\binom{\{0\}\cup[n]}{k+1}$ is also not matroidal.
\end{lemma}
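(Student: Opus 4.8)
The plan is to reduce the non-matroidality of $\text{Cone}(H)$ to that of $H$ by transporting, through the coning operation, a single failure of the exchange axiom \textbf{(B2)}. Since $\text{Cone}(H)$ is shifted, Theorem~\ref{Basic properties of algebraic shifting}(6) gives $\Delta^e(\text{Cone}(H))=\text{Cone}(H)$, so $\text{Cone}(H)\in M(\text{Cone}(H))$ and \textbf{(B1)} holds for $M(\text{Cone}(H))$. By the equivalent description of a matroid in terms of its bases, it therefore suffices to exhibit a failure of \textbf{(B2)} for $M(\text{Cone}(H))$.

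First I would extract a concrete failure of \textbf{(B2)} from the hypothesis. As $H$ is shifted, Theorem~\ref{Basic properties of algebraic shifting}(6) again gives $H\in M(H)$, so \textbf{(B1)} holds for $M(H)$; hence the assumption that $H$ is not matroidal forces \textbf{(B2)} to fail, yielding $B_1,B_2\in M(H)$ and $e_1\in B_1\setminus B_2$ such that $(B_1\setminus e_1)\cup e_2\notin M(H)$ for \emph{every} $e_2\in B_2\setminus B_1$.

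Next I would lift this data to cones. Using Theorem~\ref{Basic properties of algebraic shifting}(8) and $\Delta^e(B_i)=H$, we get $\Delta^e(\text{Cone}(B_i))=\text{Cone}(\Delta^e(B_i))=\text{Cone}(H)$, so $\text{Cone}(B_1),\text{Cone}(B_2)\in M(\text{Cone}(H))$, and these are distinct since $B_1\neq B_2$. Put $\tilde e_1=\{0\}\cup e_1$, which lies in $\text{Cone}(B_1)\setminus\text{Cone}(B_2)$ because $e_1\in B_1\setminus B_2$. The decisive observation is that every candidate exchange stays inside the world of cones: an arbitrary $\tilde e_2\in\text{Cone}(B_2)\setminus\text{Cone}(B_1)$ has the form $\{0\}\cup e_2$ with $e_2\in B_2\setminus B_1$, and since all hyperedges of $\text{Cone}(B_1)$ contain $0$,
$$(\text{Cone}(B_1)\setminus\tilde e_1)\cup\tilde e_2=\text{Cone}\big((B_1\setminus e_1)\cup e_2\big).$$

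Finally I would conclude using injectivity of coning together with Theorem~\ref{Basic properties of algebraic shifting}(8). Writing $G=(B_1\setminus e_1)\cup e_2$, that property gives $\Delta^e(\text{Cone}(G))=\text{Cone}(\Delta^e(G))$; as $\Delta^e(G)\neq H$ (because $G\notin M(H)$) and $\text{Cone}$ is injective, we obtain $\Delta^e(\text{Cone}(G))\neq\text{Cone}(H)$, i.e. $\text{Cone}(G)\notin M(\text{Cone}(H))$. Thus no $\tilde e_2$ repairs the exchange, so \textbf{(B2)} fails for $M(\text{Cone}(H))$ and $\text{Cone}(H)$ is not matroidal. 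The main point to be careful about is that $M(\text{Cone}(H))$ is in general strictly larger than $\{\text{Cone}(G):G\in M(H)\}$, so one cannot simply transport the whole matroid structure; the argument works precisely because the specific attempted exchange never leaves the set of cone-hypergraphs, where Theorem~\ref{Basic properties of algebraic shifting}(8) and injectivity of coning apply cleanly.
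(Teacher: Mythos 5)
Your proposal is correct and follows essentially the same route as the paper: extract a violating pair $B_1,B_2\in M(H)$ with a witness $e_1$, lift it to $\mathrm{Cone}(B_1),\mathrm{Cone}(B_2)\in M(\mathrm{Cone}(H))$ via Theorem~\ref{Basic properties of algebraic shifting}(8), observe that every candidate $\tilde e_2$ contains $0$ so the attempted exchange is again a cone, and conclude from $\Delta(\mathrm{Cone}(G))=\mathrm{Cone}(\Delta(G))\ne\mathrm{Cone}(H)$. Your explicit checks of \textbf{(B1)} and of injectivity of coning are minor additions the paper leaves implicit.
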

\begin{proof}
     $H$ is not matroidal, thus there is a pair of hypergraphs in $M(H)$, denoted $H_1$ and $H_2$, that violates the exchange axiom among the elements of $M(H)$.
Recall that by Theorem \ref{Basic properties of algebraic shifting}(8), for a $k$-uniform hypergraph $G$,  
    $\Delta(\text{Cone}(G))=\text{Cone}(\Delta(G))$. 
    In particular, the hypergraphs $\{0\}*H_1$ and $\{0\}*H_2$ belong to $M(\{0\}*H)$. 
    
    We now prove that $\{0\}*H_1$ and $\{0\}*H_2$ form a violating pair for the exchange axiom among the elements of $M(\{0\}*H)$. Since $H_1$ and $H_2$ violate the exchange axiom among the elements of $M(H)$, there exists $e_1\in H_1\setminus H_2$ such that for every $e_2\in H_2\setminus H_1$ we have that $(H_1\setminus e_1)\cup e_2\notin M(H)$. 
    Since every element in $(\{0\}*H_2)\setminus (\{0\}*H_1)$ has a form $e_2\cup\{0\}$ for some $e_2\in H_2\setminus H_1$, it suffices to show that $\left(\{0\}*H_1\setminus \{e_1\cup \{0\}\}\right)\cup \{e_2\cup\{0\}\}\notin M(\{0\}*H)$.

    By Theorem \ref{Basic properties of algebraic shifting}(8) again,
    $$\Delta((\{0\}*H_1\setminus(e_1\cup\{0\}))\cup(e_2\cup\{0\}))=\{0\}*\Delta(((H_1\setminus e_1)\cup e_2)).$$
    On the other hand, since $\Delta((H\setminus e_1)\cup e_2)\ne H$, we get $\{0\}*\Delta(((H_1\setminus e_1)\cup e_2))\ne \{0\}*H$.
    \end{proof}
We use this Cone Lemma to prove the
lemma below, which in turn serves in the inductive step in the proof of Theorem \ref{thm: main thm 2 and 3}.  
While the base case is proven differently in the $2$-uniform case and the $3$-uniform case of Theorem \ref{thm: main thm 2 and 3}, 
 the induction step is proved uniformly in both cases, so it is separated in the lemma below. 
Denote $\text{st}(1):=\{S\in\binom{[n]}{k}|\ 1\in S\}$. 

\begin{lemma}[Reduction] \label{lemma: reduction}
Assume that for every 
    shifted $k$-uniform hypergraph $H$ whose hyperedges do not form an initial lex-segment and additionally satisfies the two conditions below, it holds that $H$ is not matroidal.

    $(i)$ $\{2,3,\ldots,k,k+1\}$ is a hyperedge of $H$.

    $(ii)$ There exists a subset $\{1,i_2,\ldots,i_k\}\notin H$ with $i_2>2.$

    Then, for every shifted $k$-uniform hypergraph $H$ whose hyperedges do not form an initial lex-segment it holds that $H$ is not matroidal.
\end{lemma}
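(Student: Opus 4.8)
The plan is to prove the contrapositive reduction by showing that an arbitrary shifted, non-initial-lex-segment hypergraph $H$ can be transformed, via operations that preserve the matroidal/non-matroidal status, into one satisfying the two extra conditions $(i)$ and $(ii)$. The two tools available are the Cone Lemma (Lemma~\ref{coning preserve property to not be a matroid}) and the Isolated Vertex Lemma (Lemma~\ref{lemma: partial star with an additional edge 2,3 is not a matroid}), together with the basic fact (Theorem~\ref{Basic properties of algebraic shifting}(2)) that algebraic shifting is invariant under relabeling vertices. First I would set up the logical structure: assume the hypothesis of the lemma (that every $H$ with $(i)$ and $(ii)$ which is not an initial lex-segment fails to be matroidal), take an arbitrary shifted non-initial-lex-segment $H$, and split into cases according to whether $(i)$ and $(ii)$ already hold. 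If both hold, we are done immediately. The work lies in the cases where one of them fails.

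The key observation is how the two conditions can fail. Condition $(ii)$ fails exactly when every set of the form $\{1,i_2,\ldots,i_k\}$ with $i_2>2$ lies in $H$; combined with shiftedness this forces $H$ to contain the entire star $\text{st}(1)$ of the vertex $1$, so $H$ restricted to the link of $1$ is already an initial lex-segment and all the ``non-initiality'' is concentrated away from $1$. In that situation $H$ is itself a cone over vertex $1$: one writes $H=\text{Cone}(H')$ for a smaller hypergraph $H'$ on $\{2,\ldots,n\}$ in dimension $k-1$, where $H'$ is again shifted and, crucially, is \emph{not} an initial lex-segment (since $H$ was not). By induction on $k$ one concludes $H'$ is not matroidal, and then the Cone Lemma lifts this to show $H$ is not matroidal. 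So the failure of $(ii)$ is handled by peeling off a cone point and invoking the inductive hypothesis on the dimension.

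For condition $(i)$, the guiding idea is that $\{2,3,\ldots,k+1\}\in H$ is precisely the statement that $H$ contains the boundary of a simplex not using vertex $1$, which is the combinatorial signature exploited by the Isolated Vertex Lemma. If $(i)$ fails, then $\{2,3,\ldots,k+1\}\notin H$, and by shiftedness this severely constrains $H$ from above: the lex-largest possible non-star edges are ruled out. I would argue that in this regime $H$ either is already an initial lex-segment (contradiction) or can be decomposed/relabeled so that the top vertex $n$ becomes isolated while $\{2,\ldots,k+1\}$ appears as a face, matching the hypotheses of Lemma~\ref{lemma: partial star with an additional edge 2,3 is not a matroid}. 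Concretely, I expect to analyze the lex-maximum edge $T$ of $H$ and use shiftedness to show that either $H$ fills out an initial lex-segment up to $T$ (so it \emph{is} initial, excluded) or there is ``room'' permitting the isolated-vertex configuration after a suitable permutation.

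The main obstacle, I anticipate, is the interaction between the two conditions: ensuring that the reductions applied to restore one condition do not destroy the other or the non-initial-lex-segment hypothesis. In particular, when $(ii)$ fails and we cone down, we must verify that the smaller hypergraph $H'$ remains non-initial and that its eventual violating pair, after coning back up via Lemma~\ref{coning preserve property to not be a matroid}, genuinely lives in $M(H)$; and when $(i)$ fails we must rule out the possibility that forcing $\{2,\ldots,k+1\}$ out of $H$ combined with shiftedness silently makes $H$ an initial lex-segment after all. Carefully bookkeeping which vertex is isolated versus coned, and confirming that the lexicographic order behaves correctly under the relabeling $1\leftrightarrow n$ used in the Isolated Vertex Lemma, is where the routine-looking combinatorics actually requires care; I would isolate a short claim characterizing exactly when a shifted $H$ fails both $(i)$ and $(ii)$ simultaneously and show that case collapses to an initial lex-segment and hence never arises.
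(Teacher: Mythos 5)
Your proposal has the right toolbox but attaches the tools to the wrong cases, and this is not a cosmetic slip: it leaves the substantive half of the lemma unproved. When condition $(i)$ fails, i.e.\ $\{2,3,\ldots,k+1\}\notin H$, shiftedness forces every hyperedge to contain the vertex $1$ (any hyperedge $\{i_1,\ldots,i_k\}$ with $i_1\ge 2$ satisfies $\{2,\ldots,k+1\}\le_p\{i_1,\ldots,i_k\}$, so $\{2,\ldots,k+1\}$ would have to lie in $H$). Hence $H=\{1\}*H'$ is a cone, $H'$ is shifted and not an initial lex-segment, and the Cone Lemma plus induction on $k$ finishes this case. You instead try to reach the Isolated Vertex Lemma here, but that lemma has $\{2,\ldots,k+1\}\in H$ as a hypothesis, so it is structurally unusable exactly in the case where $(i)$ fails; no relabeling can help, since relabeling preserves the isomorphism type and a shifted $H$ missing $\{2,\ldots,k+1\}$ simply has no copy of $\partial\{1,\ldots,k+1\}$ avoiding a cone point.

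Conversely, when condition $(ii)$ fails you correctly deduce $\mathrm{st}(1)\subseteq H$, but then assert that ``$H$ is itself a cone over vertex $1$.'' This confuses $\mathrm{st}(1)\subseteq H$ with $H\subseteq\mathrm{st}(1)$: since $\mathrm{st}(1)$ is itself an initial lex-segment and $H$ is assumed not to be one, $H$ must contain hyperedges avoiding $1$, so $H$ is certainly not a cone and the Cone Lemma does not apply. What is actually needed here is a different reduction, by induction on $n$: pass to $H\setminus\mathrm{st}(1)$ on the vertex set $\{2,\ldots,n\}$ (still shifted, still not an initial lex-segment), obtain a violating pair $G_1,G_2\in M(H\setminus\mathrm{st}(1))$, and then prove that $\mathrm{st}(1)\cup G_1$ and $\mathrm{st}(1)\cup G_2$ form a violating pair in $M(H)$. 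The nontrivial step is showing $\Delta(\mathrm{st}(1)\cup G)=H$ whenever $\Delta(G)=H\setminus\mathrm{st}(1)$, which requires combining $\Delta(\mathrm{st}(1))=\mathrm{st}(1)$, the containment-preserving property, the cone property applied to the $(k-1)$-skeleton of $\{1\}*G$, and the preservation of the $f$-vector to pin down the shifted image; and one must also check that exchanging $e_1$ for $e_2$ commutes with adjoining $\mathrm{st}(1)$ (which uses $e_2\notin\mathrm{st}(1)$). None of this appears in your proposal, so the case where $(ii)$ fails is a genuine gap rather than a routine verification.
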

\begin{proof} 
Let $H$ be a shifted $k$-uniform hypergraph on $[n]$ whose hyperedges do not form an initial lex-segment.

Assume that $H$ does not satisfy (i). We prove by induction on $k$ that $H$ is not matroidal.

As $\{2,3,\ldots,k,k+1\}$ is not in $H$ and $H$ is shifted, 
  $H$ has no hyperedge of the form $\{i_1,i_2,\ldots,i_k\}$ with $i_1\ge 2$, i.e. $H=\{1\}* H'$ where $H'$ is a $(k-1)$-uniform shifted hypergraph (on $[n]\setminus \{1\}$) whose hyperedges do not form an initial lex-segment. Thus, by the induction hypothesis $H'$ is not matroidal. Hence by Lemma \ref{coning preserve property to not be a matroid} $H$ is not matroidal. To conclude, we may assume that the set $\{2,3,\ldots,k,k+1\}$ is a hyperedge of $H$.
 
Assume that $H$ does not satisfy (ii). 
First note that this means that for every $1\in S\in\binom{[n]}{k}$ it holds that $S\in H$. Indeed, 
if not then there exists $\{1,i_2,\ldots,i_k\}\notin H$. However, $\{1,i_2,\ldots,i_k\}\le_p\{1,n-k+2,\ldots,n-1,n\}$, and as $H$ is shifted we obtain $\{1,n-k+2,\ldots,n-1,n\}\notin H$. 
Negating (ii) gives
$2=n-k+2$, hence $n=k$ and $H=\{[n]\}$ which is an initial lex-segment, a contradiction. 

We now prove by induction on $n$ that $H$ is not matroidal. 
By assumption $H\setminus \text{st}(1)$ is not an initial 
lex-segment (on $[n]\setminus\{1\}$), and by
the induction hypothesis it is not matroidal. 
Then there is a pair of $k$-uniform hypergraphs $G_1,G_2$ on $\{2,\ldots,n\}$ violating the exchange axiom among the elements of $M(H\setminus \text{st}(1))$, i.e. there exists $e_1\in G_1\setminus G_2$ such that for every $e_2\in G_2\setminus G_1$ we have that $\Delta((G_1\setminus e_1)\cup e_2))\ne H\setminus\text{st}(1)$. 
(Here we shift w.r.t. the lexicographic order on $k$-subsets of  $\{2,\ldots,n\}$.)

We now prove that 
$\text{st}(1)\cup G_1, \text{st}(1)\cup G_2\in M(H)$, and they form a violating pair for the exchange axiom among the elements of $M(H)$.

On the one hand, $\text{st}(1)\subseteq \text{st}(1)\cup G_1$. Thus, by shiftedness of $\text{st}(1)$ and the containment preserving property, Theorem \ref{Basic properties of algebraic shifting}(7), we have that 
\begin{equation} \label{eq: proof 0}
\text{st}(1)=\Delta(\text{st}(1))\subseteq \Delta(\text{st}(1)\cup G_1).
\end{equation}

On the other hand, 
$K(\{1\}*G_1)_{\leq k-1}\subseteq K(\text{st}(1)\cup G_1).$
Thus, since algebraic shifting is defined degreewise and by Theorem \ref{Basic properties of algebraic shifting}(7) again,
\begin{equation} \label{eq: proof 1}
\Delta\left(K\left(\{1\}*G_1\right)_{k-1}\right)\subseteq\Delta\left(\text{st}(1)\cup G_1\right).
\end{equation}
Since algebraic shifting is defined degreewise,
\begin{equation} \label{eq: proof 2}
\Delta\left(K\left(\{1\}*G_1\right)_{k-1}\right)=\left(\Delta\left(K(\{1\}*G_1\right)\right)_{k-1}.
\end{equation}
By the Cone property Theorem \ref{Basic properties of algebraic shifting}(8) applied to the $(k-1)$-skeleton  we have:
\begin{equation} \label{eq: proof 3}
\left(\Delta\left(K(\{1\}*G_1)\right)\right)_{k-1}=\left(\{1\}*\Delta(K(G_1)\right))_{k-1}.
\end{equation}
Substituting \eqref{eq: proof 3} in \eqref{eq: proof 2}, and then \eqref{eq: proof 2} in \eqref{eq: proof 1} we obtain:
\begin{equation} \label{eq: proof 4}
    \left(\{1\}*\Delta(K(G_1))\right)_{k-1}\subseteq\Delta\left(\text{st}(1)\cup G_1\right).
\end{equation}
Note that since $\Delta(G_1)=H\setminus\text{st}(1)$, we get
\begin{equation} \label{eq: proof 5}
    \left(\{1\}*\left(K(H\setminus \text{st}(1)\right)\right))_{k-1}\subseteq\Delta\left(\text{st}(1)\cup G_1\right).
\end{equation}

Since algebraic shifting preserves the $f$-vector, the conditions \eqref{eq: proof 0} and \eqref{eq: proof 5} determine the algebraic shifting of $\text{st}(1)\cup G_1$, which is then equal to:
$$\Delta\left(\text{st}(1)\cup G_1\right)=H.$$

Similarly for $\text{st}(1)\cup G_2$.

We now show  
that 
the pair $\text{st}(1)\cup G_1, \text{st}(1)\cup G_2$ violates the exchange axiom among the elements of $M(H)$. For every $e_2\in (\text{st}(1)\cup G_2)\setminus(\text{st}(1)\cup G_1)$ we have that 

$$((\text{st}(1)\cup G_1)\setminus e_1)\cup e_2=\text{st}(1)\cup\left(\left(G_1\setminus e_1\right)\cup e_2\right),$$
because $e_1\in G_1\setminus G_2$ and $e_2\in G_2\setminus G_1$, in particular $e_2\notin\text{st}(1)$.

Substituting in \eqref{eq: proof 4} 
the hypergraph 
$\left(G_1\setminus e_1\right)\cup e_2$ instead of $G_1$ we get:
$$\left(\{1\}*\Delta\left(K(\left(G_1\setminus e_1\right)\cup e_2\right)\right))_{k-1}\subseteq\Delta\left(\text{st}(1)\cup \left(\left(G_1\setminus e_1\right)\cup e_2\right)\right).$$
Then, since $\Delta\left(\left(G_1\setminus e_1\right)\cup e_2\right)\ne H\setminus\text{st}(1)$, we conclude that
$$\Delta\left(\text{st}(1)\cup \left(\left(G_1\setminus e_1\right)\cup e_2\right)\right)\ne H.$$
\end{proof}

\begin{lemma}[Homology] \label{lemma: homology}
Let $H\subset\binom{[n]}{k}$ be a 
$k$-uniform 
shifted hypergraph such that a subset $\{1,i_2,\ldots,i_k\}\notin H$ and all facets of a $k$-simplex boundary complex $\partial T$ are hyperedges of $H$. Then, for every facet $e_1$ of $\partial T$ and every $e_2\in\binom{[n]}{k}$ such that $\{i_2,\ldots,i_k\}\subset e_2$, we have $\Delta((H\setminus e_1)\cup e_2)\ne H$.     
\end{lemma}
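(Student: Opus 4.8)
The plan is to argue through top-dimensional Betti numbers, exploiting that exterior shifting preserves them (Theorem~\ref{Basic properties of algebraic shifting}(4)) and fixes the shifted complex $H$ (Theorem~\ref{Basic properties of algebraic shifting}(6)). Since $K(H)$ is pure $(k-1)$-dimensional it has no $k$-dimensional faces, so $B_{k-1}=0$ and $\beta_{k-1}$ equals the dimension of the space $Z_{k-1}$ of $(k-1)$-cycles over $\mathbb{L}$ for every complex that will appear. It therefore suffices to prove that $\beta_{k-1}((H\setminus e_1)\cup e_2)=\beta_{k-1}(H)-1$: then $\beta_{k-1}(\Delta((H\setminus e_1)\cup e_2))=\beta_{k-1}((H\setminus e_1)\cup e_2)\ne\beta_{k-1}(H)=\beta_{k-1}(\Delta(H))$, forcing $\Delta((H\setminus e_1)\cup e_2)\ne H$.

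The key combinatorial observation, which I would establish first, is that the $(k-1)$-set $\sigma:=\{i_2,\ldots,i_k\}$ is contained in no hyperedge of $H$. Writing $\{1,i_2,\ldots,i_k\}$ in increasing order forces $i_2\ge 2$, so for any $y\notin\sigma$ the $k$-set $\sigma\cup\{y\}$ is obtained from $\{1,i_2,\ldots,i_k\}$ by replacing the entry $1$ with $y\ge 1$; re-sorting can only increase each coordinate, whence $\{1,i_2,\ldots,i_k\}\le_p\sigma\cup\{y\}$. As $H$ is shifted and $\{1,i_2,\ldots,i_k\}\notin H$, no such $\sigma\cup\{y\}$ can lie in $H$. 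In particular $e_2\notin H$, so $e_2\ne e_1$ and the two modifications genuinely delete one facet and adjoin a new one.

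Next I would handle the removal of $e_1$. Since all $k$-subsets of the $(k+1)$-set $T$ are hyperedges of $H$, the fundamental cycle $z_T=\sum_{t\in T}\pm[T\setminus\{t\}]$ of the sphere $\partial T\cong S^{k-1}$ lies in $Z_{k-1}(H)$ and has coefficient $\pm1$ on $e_1=T\setminus\{t_0\}$. Hence the linear functional ``coefficient of $e_1$'' is not identically zero on $Z_{k-1}(H)$, so $Z_{k-1}(H\setminus e_1)$, being exactly its kernel, has dimension one smaller; that is, $\beta_{k-1}(H\setminus e_1)=\beta_{k-1}(H)-1$.

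Finally I would show that adjoining $e_2$ restores no cycle, which is where the observation is used. A $(k-1)$-cycle of $(H\setminus e_1)\cup e_2$ with nonzero coefficient on $e_2$ exists if and only if $\partial[e_2]\in\text{span}\{\partial[S]:S\in H\setminus e_1\}$. The facet $\sigma=e_2\setminus\{x\}$ of $e_2$ contributes a term $\pm[\sigma]$ to $\partial[e_2]$, so the coefficient of $[\sigma]$ there is nonzero; on the other hand, by the observation no $S\in H\setminus e_1$ contains $\sigma$, so $[\sigma]$ has coefficient $0$ in every $\partial[S]$ with $S\in H\setminus e_1$. Thus $\partial[e_2]$ is not in their span, no new cycle appears, and $Z_{k-1}((H\setminus e_1)\cup e_2)=Z_{k-1}(H\setminus e_1)$, giving $\beta_{k-1}((H\setminus e_1)\cup e_2)=\beta_{k-1}(H)-1$ and completing the argument. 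The only genuinely delicate point is the $\le_p$-domination observation of the second paragraph; once it is secured, the homological bookkeeping of the last two paragraphs is routine, so I expect that combinatorial step to be the main obstacle to get right.
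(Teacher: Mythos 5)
Your proposal is correct and follows essentially the same route as the paper: use preservation of Betti numbers under shifting, note that shiftedness forces no hyperedge of $H$ to contain $\{i_2,\ldots,i_k\}$ (hence $e_2$ completes no $(k-1)$-cycle), and observe that deleting the facet $e_1$ of $\partial T$ strictly decreases $\beta_{k-1}$. You simply fill in the details (the $\le_p$-domination check and the cycle-space bookkeeping) that the paper leaves implicit.
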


\begin{proof}
The deletion of a hyperedge $e_1$ from $H$ decreases $\beta_{k-1}$, i.e. $\beta_{k-1}(H\setminus e_1)<\beta_{k-1}(H)$. Since $\{1,i_2,\ldots,i_k\}$ is not a hyperedge of $H$, 
then by shiftedness 
there are no hyperedges in $H$ which contain the set $\{i_2,\ldots,i_k\}$. 
Thus, 
adding $e_2$ 
completes
no $(k-1)$-homology cycle, and 
thus $\beta_{k-1}((H\setminus e_1)\cup e_2)=\beta_{k-1}(H\setminus e_1)<\beta_{k-1}(H)$. Hence, by Theorem \ref{Basic properties of algebraic shifting}(4), $\Delta((H\setminus e_1)\cup e_2)\ne\Delta(H)=H$.
\end{proof}

\begin{lemma}[Small $e_2$]  \label{lemma: e_2<max}
Let $H\subseteq\binom{[n]}{k}$ be a shifted $k$-uniform hypergraph and let $e_1\in H$. Then for every $e_2\in \binom{[n]}{k}$ such that $e_2\notin H$ and $e_2<_{\text{lex}}e_1$, $\Delta^e((H\setminus e_1)\cup e_2)\neq H$.
\end{lemma}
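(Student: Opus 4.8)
I would argue by contradiction and compare the two hypergraphs $H$ and $G:=(H\setminus e_1)\cup\{e_2\}$ inside a suitable initial lex-segment. Set $I:=\{T\in\binom{[n]}{k}:T\le_{\mathrm{lex}}e_2\}$, and suppose toward a contradiction that $\Delta^e(G)=H$. The plan is to count, separately, how many hyperedges of $\Delta^e(G)$ and of $H$ lie in $I$, and to show these counts cannot agree. On the $H$-side this is immediate: since $e_2\notin H$ we have $|H\cap I|=|H\cap I_{<e_2}|$, where $I_{<e_2}=\{T:T<_{\mathrm{lex}}e_2\}$. If $\Delta^e(G)=H$ then intersecting with $I$ forces $|\Delta^e(G)\cap I|=|H\cap I|$, so it suffices to prove the strict inequality $|\Delta^e(G)\cap I|>|H\cap I_{<e_2}|$.

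For the $G$-side I would use the greedy-column definition of exterior shifting directly. The hyperedges of $\Delta^e(G)$ lying in $I$ are exactly the columns among $I$ selected by the greedy process applied to $X\!\left(G,\binom{[n]}{k}\right)$; since the columns indexed by $I$ are processed first, the number selected equals $\mathrm{rank}\,X(G,I)$, where $X(G,I)$ has rows indexed by $G$ and columns by $I$. I would bound this rank from below by $|G\cap I|$. Setting $B:=G\cap I$, the principal submatrix $X(B,B)$ uses rows $B\subseteq G$ and columns $B\subseteq I$, so if $X(B,B)$ is nonsingular then its $|B|$ columns are linearly independent already within $X(G,I)$, giving $\mathrm{rank}\,X(G,I)\ge |B|=|G\cap I|$. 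Finally $|G\cap I|=|H\cap I_{<e_2}|+1$: because $e_2<_{\mathrm{lex}}e_1$ we have $e_1\notin I$, so deleting $e_1$ from $H$ does not affect the intersection with $I$, while $e_2\in G\cap I$ is a genuinely new element not in $H$. Combining, $|\Delta^e(G)\cap I|=\mathrm{rank}\,X(G,I)\ge |H\cap I_{<e_2}|+1$, contradicting $\Delta^e(G)=H$. (Morally this says $e_2\in\Delta^e(G)$, but the counting form avoids pinning down exactly which column is gained.)

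The main obstacle is the nonvanishing of the principal compound minor $\det X(B,B)$ for generic $X$ and arbitrary $B\subseteq\binom{[n]}{k}$; note that this is delicate, since general compound minors $\det X(B,C)$ \emph{do} vanish for Grassmann/Pl\"ucker reasons, so I cannot invoke a blanket genericity statement. The way I would overcome this is to isolate a surviving leading monomial. Writing $\det X(B,B)=\sum_{\sigma\in\mathrm{Sym}(B)}\mathrm{sgn}(\sigma)\prod_{S\in B}X_{S,\sigma(S)}$, I would observe that a minor $X_{S,T}=\det(x_{s_i t_j})$ contains a monomial supported entirely on the diagonal variables $x_{tt}$ if and only if $S=T$, in which case that monomial is $\prod_{s\in S}x_{ss}$ with coefficient $+1$. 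Hence only $\sigma=\mathrm{id}$ contributes any diagonal-only monomial, so the monomial $\prod_{s}x_{ss}^{\,d(s)}$ with $d(s)=|\{S\in B:s\in S\}|$ appears in $\det X(B,B)$ with coefficient exactly $1$ and cannot cancel. This shows $\det X(B,B)\not\equiv 0$ as a polynomial over $\mathbb F$, hence it is nonzero at the generic matrix $X$, over any field, completing the argument.
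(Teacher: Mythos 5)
Your proof is correct and rests on the same key fact as the paper's: the nonvanishing of a principal compound minor $\det X(B,B)$ for a generic matrix $X$, which the paper cites from Kalai's work (and proves elsewhere, in its ``Case $e_2$ is almost small'' lemma, by exactly the diagonal-monomial argument you give). The only difference is cosmetic: the paper applies this to the whole of $(H\setminus e_1)\cup e_2$ and concludes that the greedy column basis is lexicographically smaller than $H$, whereas you localize to the initial lex-segment ending at $e_2$ and phrase the same conclusion as a rank/counting inequality.
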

\begin{proof}
    Denote $H':=(H\setminus e_1)\cup e_2$. Consider a compound matrix $X(H',H')$. By the same argument as in \cite[Proposition 4.2]{K Symm} we get that $\det X(H',H')\ne 0$. Thus, the greedy basis for the column space of the matrix $X(H',\binom{[n]}{k})$ is lexicographically not greater than $H'$. Hence $\Delta^e(H')\le_{\text{lex}}H'$. On the other hand, $H'<_{\text{lex}}H$ since $e_2<_{\text{lex}}e_1$. We conclude that $\Delta^e((H\setminus e_1)\cup e_2)\neq \Delta^e(H)=H$.  
\end{proof}

Denote by $m_H$ the maximum  element of $H$ w.r.t. the lexicographic order. If $e_1=m_H$, then the assertion of Lemma \ref{lemma: e_2<max} holds also for symmetric shifting. The proof below works for both exterior and symmetric shifting.
\begin{lemma}[Case $e_2<_{\text{lex}}m_H$] 
    Let $H\subseteq\binom{[n]}{k}$ be a shifted $k$-uniform hypergraph. 
    Then for every $e_2\in \binom{[n]}{k}$ such that $e_2\notin H$ and $e_2<_{\text{lex}}m_H$, $\Delta((H\setminus m_H)\cup e_2)\neq H$.
\end{lemma}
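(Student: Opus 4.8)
The plan is to exploit the fact that replacing $m_H$ by a lexicographically smaller $k$-set pushes the entire hypergraph strictly below $m_H$ in the lexicographic order, and that shifting cannot raise a hypergraph past an initial lex-segment that already contains it. Concretely, I would set $H':=(H\setminus m_H)\cup e_2$ and introduce $L:=\{T\in\binom{[n]}{k}:\ T<_{\text{lex}}m_H\}$, the initial lex-segment consisting of all $k$-sets strictly below $m_H$. The first step is the inclusion $H'\subseteq L$: since $m_H$ is the lexicographic maximum of $H$, every element of $H\setminus m_H$ satisfies $T<_{\text{lex}}m_H$, and by hypothesis $e_2<_{\text{lex}}m_H$ as well, so every hyperedge of $H'$ lies in $L$.

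The second step is to observe that $L$ is shifted. This is immediate from the definitions: since $S<_pS'$ implies $S<_{\text{lex}}S'$, any $k$-set dominated in the product order by a member of $L$ is lexicographically smaller than $m_H$ and therefore also belongs to $L$. Consequently $L$ (more precisely its down-closure $K(L)$) is a shifted simplicial complex, so Theorem~\ref{Basic properties of algebraic shifting}(6) yields $\Delta(L)=L$. This is the only place where shiftedness of an initial lex-segment enters, and it is exactly what makes the argument uniform for both exterior and symmetric shifting.

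Finally, I would combine these ingredients through the containment-preserving property. From $H'\subseteq L$ and Theorem~\ref{Basic properties of algebraic shifting}(7) we obtain $\Delta(H')\subseteq\Delta(L)=L$. Since $m_H\notin L$ by construction, it follows that $m_H\notin\Delta(H')$, whereas $m_H\in H$. Hence $\Delta((H\setminus m_H)\cup e_2)=\Delta(H')\neq H$, as claimed; the same chain of inclusions applies verbatim to $\Delta^e$ and to $\Delta^s$, which is precisely the promised uniformity.

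I do not expect a genuine obstacle here, as the core argument is short. The only point requiring care is the bookkeeping between the $k$-uniform hypergraph $L$ and the simplicial complex $K(L)$ to which Theorem~\ref{Basic properties of algebraic shifting}(6),(7) literally apply: one must check that closing an initial lex-segment down to all of its subsets produces a shifted complex, and that passing to top-dimensional faces is compatible with the inclusions above. This follows from shifting being defined degreewise together with the implication $S<_pS'\Rightarrow S<_{\text{lex}}S'$, and it is the step I would write out most explicitly.
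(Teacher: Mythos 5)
Your proposal is correct and follows essentially the same route as the paper: both compare $(H\setminus m_H)\cup e_2$ with the initial lex-segment of all $k$-sets strictly below $m_H$, use that this segment is shifted (hence fixed by $\Delta$) together with the containment-preserving property, and conclude that $m_H$ cannot appear in the shifted hypergraph. The only difference is that you spell out why an initial lex-segment is shifted (via $S<_pS'\Rightarrow S<_{\text{lex}}S'$), which the paper leaves implicit.
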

\begin{proof}
    Consider the hypergraph $H'$ given by the initial lex-segment consisting of all elements up to $m_H$ not including $m_H$. $H'$ is shifted, thus by Theorem \ref{Basic properties of algebraic shifting}(6) $\Delta(H')=H'$. On the other hand, $ ((H\setminus m_H)\cup e_2) \subset H'$, and thus by the containment preserving property Theorem \ref{Basic properties of algebraic shifting}(7), we have that $\Delta((H\setminus m_H)\cup e_2)\subset\Delta(H')$. Note that $m_H\notin H'=\Delta(H')$ and thus   
    $m_H\notin \Delta((H\setminus m_H)\cup e_2)$. However, $m_H\in H=\Delta(H)$. 
\end{proof}

\begin{lemma}[Induced subgraph] \label{lemma: induced subgraph}
    Let $H\subseteq\binom{[n]}{k}$ be a shifted $k$-uniform hypergraph, let $e_1\in H$ and let $e_2\in\binom{[n]}{k}$ such that $\max e_2<\max e_1$. Then, $\Delta((H\setminus e_1)\cup e_2)\ne H$.
\end{lemma}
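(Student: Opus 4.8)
The plan is to exhibit a shifting-invariant quantity that distinguishes $H':=(H\setminus e_1)\cup e_2$ from $H$, namely the number of $k$-sets contained in the initial vertex set $[m]$, where $m:=\max e_1-1$. Write $M:=\max e_1$, so that $e_1\not\subseteq[m]$ while $e_2\subseteq[m]$ because $\max e_2<M$. First I would dispose of the trivial case $e_2\in H$: then $H'=H\setminus\{e_1\}$ has $|H'|=|H|-1$, so by $f$-vector preservation (Theorem~\ref{Basic properties of algebraic shifting}(3)) $\Delta(H')$ has fewer $k$-faces than $H$ and cannot equal $H$. So I may assume $e_2\notin H$, whence $H'$ and $H$ have the same number of $k$-faces, but $|\{S\in H':S\subseteq[m]\}|=|\{S\in H:S\subseteq[m]\}|+1$.

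The key step is the monotonicity claim $(\star)$: for every simplicial complex $K$ on $[n]$ and every $m$, the number of $k$-faces of $\Delta(K)$ contained in $[m]$ is at least the number of $k$-faces of $K$ contained in $[m]$. I would prove this using only the listed properties of $\Delta$. Set $L:=\{S\in K:S\subseteq[m]\}$, the induced subcomplex on $[m]$; it is a subcomplex of $K$, so $\Delta(L)\subseteq\Delta(K)$ by Theorem~\ref{Basic properties of algebraic shifting}(7). Since $\Delta(L)$ is shifted (Theorem~\ref{Basic properties of algebraic shifting}(5)) and has the same number of vertices as $L$, namely $f_0(L)\le m$ (Theorem~\ref{Basic properties of algebraic shifting}(3)), its vertex set is the initial segment $[f_0(L)]\subseteq[m]$; hence every face of $\Delta(L)$ lies in $[m]$. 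Combining, the $k$-faces of $\Delta(L)$ all lie both in $\Delta(K)$ and in $\binom{[m]}{k}$, so their number is at most the number of $k$-faces of $\Delta(K)$ contained in $[m]$; and by $f$-vector preservation this number equals the number of $k$-faces of $L$, i.e. the number of $k$-faces of $K$ contained in $[m]$. This proves $(\star)$.

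Finally I apply $(\star)$ to $K=K(H')$. If $\Delta(H')=H$, then since $H$ is shifted the number of its $k$-faces contained in $[m]$ is exactly $|\{S\in H:S\subseteq[m]\}|$, whereas $(\star)$ forces this number to be at least $|\{S\in H':S\subseteq[m]\}|=|\{S\in H:S\subseteq[m]\}|+1$, a contradiction. Hence $\Delta(H')\ne H$. The argument invokes only Theorem~\ref{Basic properties of algebraic shifting}(3),(5),(7), so it applies verbatim to both exterior and symmetric shifting. The step requiring care — and the real content of the lemma — is $(\star)$: the point is that passing to the induced subcomplex on $[m]$ before shifting can only lose $k$-faces, because its shift remains supported on $[m]$ and sits inside the shift of the whole complex; intuitively, shifting never decreases the concentration of faces on an initial set of vertices.
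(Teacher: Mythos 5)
Your proof is correct and rests on the same mechanism as the paper's: restrict to an initial segment of vertices, combine the containment-preserving property with the fact that the shift of a complex supported on an initial segment $[m]$ remains supported on $[m]$, and compare the resulting face counts. The paper packages this slightly differently --- it restricts $H$ itself to $[\max e_2]$, adds $e_2$, and exhibits an explicit face $J\in\binom{[\max e_2]}{k}$ with $J\in\Delta((H\setminus e_1)\cup e_2)$ but $J\notin H$ --- which is exactly your monotonicity claim $(\star)$ in disguise, applied at the cutoff $\max e_2$ rather than $\max e_1 - 1$.
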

\begin{proof}
    Denote $t=\max e_2$. Consider the induced $k$-uniform hypergraph $H'\in\binom{[t]}{k}$ given by
    $$H':=\left\{I\in H|\ I\in\binom{[t]}{k}\right\}.$$
    Note that since $H$ is shifted, then so is $H'$. 

    On the one hand, $H'\cup e_2\subseteq (H\setminus e_1)\cup e_2$, and thus by the containment preserving property, Theorem~\ref{Basic properties of algebraic shifting}(7), $\Delta(H'\cup e_2)\subseteq \Delta((H\setminus e_1)\cup e_2)$. On the other hand, since $H'$ is shifted, by Theorem \ref{Basic properties of algebraic shifting}(6) we have  $H'=\Delta(H')\subset\Delta(H'\cup e_2)$. Thus, $\Delta(H'\cup e_2)=H'\cup \{J\}$, where $J\in\binom{[t]}{k}$. 
    We conclude that $J\in \Delta((H\setminus e_1)\cup e_2)$ while  $J\notin H$.
\end{proof}

\begin{lemma}[Case $e_2$ is almost small.] \label{lemma: compound matrix with s}
Let $H\in\binom{[n]}{k}$ be a shifted $k$-uniform hypergraph. Let $e_1\in H$ and let $e_2\in\binom{[n]}{k}$ such that $e_2\notin H$. Let $S\in\binom{[n]}{k}$ such that $S<_{\text{lex}}e_1$, $S\notin H$ and $|e_2\setminus S|=|S\setminus e_2|=1$. Then $\Delta^e((H\setminus e_1)\cup e_2)\ne H$.
\end{lemma}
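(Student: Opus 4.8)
The plan is to mirror the strategy of Lemma~\ref{lemma: e_2<max}, but since here we do not assume $e_2<_{\mathrm{lex}}e_1$ (so $H':=(H\setminus e_1)\cup e_2$ may well be $\succ$ $H$ in the Gale order and cannot serve as a certifying basis itself), I would instead use the auxiliary set $S$ to produce a \emph{different} column set that is both a basis and Gale-smaller than $H$. Recall that $\Delta^e(H')$ is the greedily chosen column basis of the compound matrix $X\!\left(H',\binom{[n]}{k}\right)$, hence the Gale-minimal basis, and that it is $\preceq_{\mathrm{Gale}}$ every basis (the standard greedy fact already invoked in Lemma~\ref{lemma: e_2<max} and Corollary~\ref{Initial segment is a matroid}). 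Set $B^{\ast}:=(H\setminus e_1)\cup S$. Since $S\notin H$ we have $|B^\ast|=|H|=|H'|$, which equals the rank of $X\!\left(H',\binom{[n]}{k}\right)$ (the full compound matrix being nonsingular), so $B^\ast$ is a basis as soon as $\det X(H',B^\ast)\neq 0$. Moreover, because $S\notin H$ and $S<_{\mathrm{lex}}e_1$, replacing $e_1$ by the lex-smaller $S$ yields a sequence that is termwise $\le_{\mathrm{lex}}$ the increasing sequence of $H$ and strictly smaller in the slot where $S$ is inserted; thus $B^\ast\prec_{\mathrm{Gale}}H$.

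It remains to prove $\det X(H',B^\ast)\neq 0$, i.e. $\det X\!\big((H\setminus e_1)\cup e_2,\,(H\setminus e_1)\cup S\big)\neq 0$, and this is where the single-swap hypothesis $|e_2\setminus S|=|S\setminus e_2|=1$ is indispensable and is the main obstacle. Write $\tilde H:=H\setminus e_1$, $A:=e_2\cap S$, $e_2=A\cup\{a\}$, $S=A\cup\{b\}$. I would show the minor is not the zero polynomial by specializing $X=I+\varepsilon Y$ with $Y$ generic and extracting the lowest power of $\varepsilon$. For $k$-sets $T,U$ the entry $X_{TU}=\det\big((I+\varepsilon Y)[T,U]\big)$ has $\varepsilon$-order at least $|T\setminus U|$, with leading coefficient $\pm\det Y[T\setminus U,\,U\setminus T]$ and $X_{TT}=1+O(\varepsilon)$. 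Expanding $\det X(H',B^\ast)=\sum_{\phi}\operatorname{sgn}(\phi)\prod_{T\in H'}X_{T\phi(T)}$ over bijections $\phi\colon H'\to B^\ast$, the $\varepsilon$-order of the $\phi$-term is $\sum_{T}|T\setminus\phi(T)|\ge 1$ (it cannot be $0$, since that would force $\phi$ to fix $e_2$, impossible as $e_2\notin B^\ast$). The crucial point is that the \emph{unique} $\phi$ of order $1$ is $\phi^{\ast}$, which fixes $\tilde H$ and sends $e_2\mapsto S$: an order-$1$ assignment fixes all sets but one, and fixing all of $\tilde H$ forces the remaining set to be $e_2$ mapped to the unused element $S$, using $|e_2\setminus S|=1$. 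Consequently the coefficient of $\varepsilon^{1}$ equals $\operatorname{sgn}(\phi^\ast)\,\big(\pm\det Y[e_2\setminus S,\,S\setminus e_2]\big)=\pm\,y_{ab}$, a nonzero generic entry, so the minor does not vanish identically and is therefore nonzero for generic $X$.

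Combining the two parts gives $\Delta^e(H')\preceq_{\mathrm{Gale}}B^\ast\prec_{\mathrm{Gale}}H$, whence $\Delta^e\big((H\setminus e_1)\cup e_2\big)\neq H$, as desired. The delicate step to write out carefully is the uniqueness of the minimal-$\varepsilon$ contribution, namely that no $\phi\neq\phi^\ast$ contributes at order $\varepsilon^{1}$ (all others having order $\ge 2$, so there is no cancellation at order one); this is precisely where the single-swap condition is used, since a larger symmetric difference $|e_2\setminus S|$ would raise the order of $\phi^\ast$ itself and could allow competing assignments to tie. I would also state explicitly, as background facts, the estimate $X_{TU}=O(\varepsilon^{|T\setminus U|})$ together with its leading coefficient, and the greedy/Gale-minimality characterization of $\Delta^e$.
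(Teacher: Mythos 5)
Your proof is correct and follows essentially the same route as the paper's: both reduce the statement to showing $\det X\bigl((H\setminus e_1)\cup e_2;(H\setminus e_1)\cup S\bigr)\neq 0$ and then conclude via the greedy/lex-minimality of the column basis defining $\Delta^e$, with $(H\setminus e_1)\cup S<_{\text{lex}}H$. The paper certifies the nonvanishing by isolating the unique ``diagonal monomial'' with exactly one off-diagonal factor $x_{pq}$, which is precisely the $\varepsilon^1$-coefficient you extract from the specialization $X=I+\varepsilon Y$ — the same computation in slightly different bookkeeping.
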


\begin{proof}
Denote $e_2=\{l_1<l_2<\ldots<l_k\}$, $e_2\setminus S=\{l_i\}$ and $S\setminus e_2=\{y\}$.
For convenience order $S=\{l_1,\ldots,l_{i-1},y,l_{i+1},\ldots l_k\}$. 
Consider the compound matrix $X((H\setminus e_1)\cup e_2;(H\setminus e_1)\cup S)$. Contrary to the usual notations order its rows and columns in the following way: rows and columns corresponding to $H\setminus e_1$ are ordered in the lexicographic order, the row corresponding to $e_2$ is the last row, and the column corresponding to $S$ is the last column, see Fig. \ref{pic:cofactor matrix with S}. 

Note that the reorderings made above do not affect the invertibility of $X((H\setminus e_1)\cup e_2;(H\setminus e_1)\cup S)$ since reordering of the subset $S$ leads to the same $\pm 1$ sign change in front of all the entries in the column corresponding to $S$, which preserves the linear span of the columns, and reordering rows and columns also only changes the sign of the determinant.

Next we prove that the following determinant is nonzero:

\begin{equation} \label{eq: def of det}
    \det X((H\setminus e_1)\cup e_2;(H\setminus e_1)\cup S)=\sum\limits_{\sigma\in S_{f_{k-1}(H)}}(-1)^{sgn (\sigma)}X_{T_1\sigma(T_1)}\cdot\ldots\cdot X_{T_{f_{k-1}(H)}\sigma(T_{f_{k-1}(H)})},
    \end{equation}
    where $S_m$ is a permutation group of the set $[m]$.

Note that each factor $X_{T_i\sigma(T_i)}$, where $1\le i\le f_{k-1}(H)$, in the expression above is a certain determinant by itself:
\begin{equation} \label{eq: def of det 2} 
    X_{T_i\sigma(T_i)}=\sum\limits_{\sigma\in S_k}(-1)^{sgn(\sigma)} x_{t_1\sigma(t_1)}\cdot\ldots\cdot x_{t_k\sigma(t_k)},
\end{equation}
for $T_i=\{t_1,\ldots,t_k\}$.
 
 We 
 call {\itshape the diagonal monomial} the monomial in $\det X((H\setminus e_1)\cup e_2;(H\setminus e_1)\cup S)$ corresponding to the choice of the identity permutations both in \eqref{eq: def of det} and
 in \eqref{eq: def of det 2} for every $1\le i\le f_{k-1}(H)$.

In order to prove that the matrix $X((H\setminus e_1)\cup e_2;(H\setminus e_1)\cup S)$ is invertible we will prove that the coefficient of the diagonal monomial in $\det X((H\setminus e_1)\cup e_2;(H\setminus e_1)\cup S)$ is nonzero. 

Since $|S\setminus e_2|=1$ and $|e_2\setminus S|=1$ the diagonal monomial has the form 
\begin{equation} \label{eq: form of the monomial}
    \prod\limits_{j=1}^{k(f_{k-1}(H)-1)+(k-1)} x_{i_ji_j}\ \cdot\ x_{pq},
    \end{equation}
where $p\ne q$, in fact $p=l_i$ and $q=y$.

Note that every nondiagonal entry of the matrix $X((H\setminus e_1)\cup e_2;(H\setminus e_1)\cup S)$ contains  
in each summand of its expansion a variable
$x_{pq}$ where the two indices differ, namely $p\ne q$.
Thus,
any permutation except for the identity contributes to \eqref{eq: def of det} only monomials with at least two nondiagonal entries.  
Thus,
monomials in $\det X((H\setminus e_1)\cup e_2;(H\setminus e_1)\cup S)$ of the form \eqref{eq: form of the monomial} can appear only in the summand in \eqref{eq: def of det} corresponding to the identity permutation. Any permutation on $[k]$ except for the identity contributes to \eqref{eq: def of det 2} at least two factors $x_{pq}$ with $p\ne q$. Thus, monomials in $\det X((H\setminus e_1)\cup e_2;(H\setminus e_1)\cup S)$ of the form \eqref{eq: form of the monomial} can appear only in the summand corresponding to the identity permutation in \eqref{eq: def of det} and only when picking the identity permutation in \eqref{eq: def of det 2} for every $1\le i\le f_{k-1}(H)$. However, this is the definition of the diagonal monomial. Thus, the only monomial of the form \eqref{eq: form of the monomial} is the diagonal monomial, so its coefficient is $1$.

 Since the matrix $X((H\setminus e_1)\cup e_2;(H\setminus e_1)\cup S)$ is invertible, the basis for the column space of the matrix $X((H\setminus e_1)\cup e_2;\binom{[n]}{k})$ chosen in the greedy way is not lexicographically greater than $(H\setminus e_1)\cup S$. 
Thus, 
$$\Delta^e((H\setminus e_1)\cup e_2)\le_{\text{lex}}(H\setminus e_1)\cup S.$$
However, since $S<_\text{lex}e_1$, we have that $$(H\setminus e_1)\cup S<_{\text{lex}} H=\Delta^e(H).$$
Taking these together we conclude that $\Delta^e((H\setminus e_1)\cup e_2)<_{\text{lex}}H$.
\end{proof}

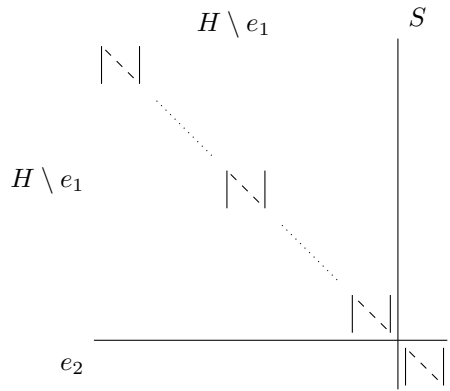
\begin{figure}[ht]
\centering
\begin{tikzpicture}
    \draw (-4,0) -- node [above left] {}(0,0);
    \draw (0,0) -- node [above right] {}(0,4);

    \draw (0.1,-0.1) -- node [above] {}(0.1,-0.6);
    \draw (0.6,-0.1) -- node [above] {}(0.6,-0.6);
    \draw[dashed] (0.15,-0.15) -- node [above right] {}(0.55,-0.55);

    \draw (-0.1,0.1) -- node [above] {}(-0.1,0.6);
    \draw (-0.6,0.1) -- node [above] {}(-0.6,0.6);
    \draw[dashed] (-0.15,0.15) -- node [above right] {}(-0.55,0.55);

    \draw (-3.9,3.9) -- node [above] {}(-3.9,3.4);
    \draw (-3.4,3.9) -- node [above] {}(-3.4,3.4);
    \draw[dashed] (-3.85,3.85) -- node [above right] {}(-3.45,3.45);

    \draw (-1.75,1.75) -- node [above] {}(-1.75,2.25);
    \draw (-2.25,1.75) -- node [above] {}(-2.25,2.25);
    \draw[dashed] (-2.2,2.2) -- node [above right] {}(-1.8,1.8);

    \draw[dotted] (-0.8,0.8) -- node [above right] {}(-1.55,1.55);
    \draw[dotted] (-2.45,2.45) -- node [above right] {}(-3.2,3.2);

    \draw (0,0) -- node [above right] {}(0.65,0);

    \draw (0,0) -- node [above right] {}(0,-0.65);

   \draw (-4,2.125) node [left]{$H\setminus e_1$};
   \draw (-1.55,4.19) node [left]{$H\setminus e_1$};
   \draw (-4,-0.35) node [left]{$e_2$};
   \draw (0.5,4.29) node [left]{$S$};
\end{tikzpicture}
\caption{Compound matrix $X((H\setminus e_1)\cup e_2;(H\setminus e_1)\cup S)$ and diagonal monomial.}
    \label{pic:cofactor matrix with S}
\end{figure}

\subsection{Proofs of main results} \label{sec: Proofs of main results}

In this section we prove Theorem \ref{thm: main thm 2 and 3} and
\ref{main theorem}.

\begin{proof}[Proof of Theorem \ref{thm: main thm 2 and 3}, $2$-uniform case.]

By Corollary \ref{Initial segment is a matroid} if the edges of $H$ form an initial lex-segment, then $H$ is matroidal.
For the opposite direction,
by Lemma \ref{lemma: reduction} we can assume that there is a subset $I\in \binom{[n]}{2}$ such that $1\in I$ and $I\notin H$. Denote by $\{1,i\}$ the lexicographically first such subset. Then by shiftedness of $H$ we conclude that $\{1,n\}\notin H$, i.e. $n$ is an isolated vertex. Since the edges of $H$ do not form an initial lex-segment, we conclude that $\{2,3\}\in H$. Thus, by Lemma \ref{lemma: partial star with an additional edge 2,3 is not a matroid} we get that $H$ is not matroidal.  
\end{proof}

Recall that $m_H$ denotes the maximum element of $H$ w.r.t. the lexicographic order, and the convention of writing in order the elements of a set.

\begin{proof}[Proof of Theorem \ref{thm: main thm 2 and 3}, $3$-uniform case.]
By Corollary \ref{Initial segment is a matroid} if the hyperedges of $H$ form an initial lex-segment, then $H$ is matroidal.
For the opposite direction, 
By Lemma \ref{lemma: reduction} we can assume that $\{2,3,4\}\in H$, and that there is a subset $I\in \binom{[n]}{3}$ such that $1\in I$ and $I\notin H$. Denote by 
$\{1,j,i\}$ the triple not in $H$ such that for every other triple $\{1,j',i'\}\notin H$, either $i<i'$, or $i=i'$ and $j<j'$. 
Equivalently, we can order all triples in as on Fig. \ref{pic:choice of skip} and start moving a vertical line from the right side to the left until we reach a situation where all triples from the left side of the line are hyperedges of $H$. Then, we take as $\{1,j,i\}$ the highest triple which is not a hyperedge of $H$ from the first column on the right side of the line.

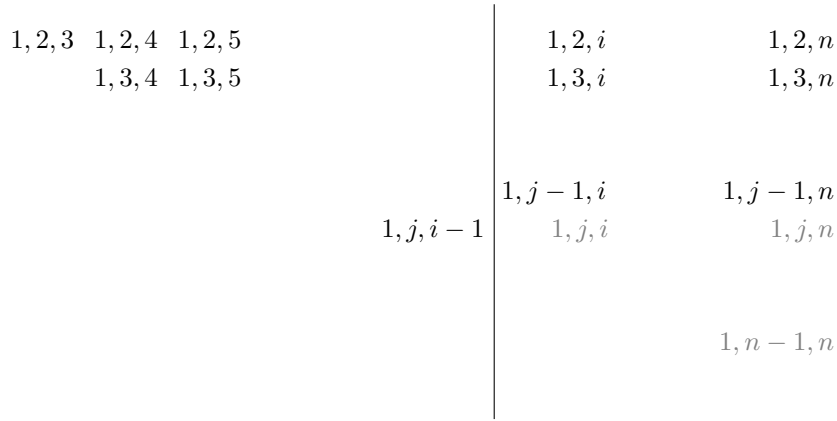
\begin{figure}[ht]
\centering
\begin{tikzpicture}
    \draw (5.4,-5) -- node [above left] {}(5.4,0.5);
    
   \draw (0,0) node [left]{$1,2,3$};
   \draw (1.1,0) node [left]{$1,2,4$};
   \draw (2.2,0) node [left]{$1,2,5$};

   \draw (7,0) node [left]{$1,2,i$};
   \draw (10,0) node [left]{$1,2,n$};

   \draw (1.1,-0.5) node [left]{$1,3,4$};
   \draw (2.2,-0.5) node [left]{$1,3,5$};
   \draw (7,-0.5) node [left]{$1,3,i$};
   \draw (10,-0.5) node [left]{$1,3,n$};

   \draw (7,-2) node [left]{$1,j-1,i$};
   \draw (10,-2) node [left]{$1,j-1,n$};
   \draw (5.4,-2.5) node [left]{$1,j,i-1$};
   \draw[gray] (7.03,-2.5) node [left]{$1,j,i$};
   \draw[gray] (10,-2.5) node [left]{$1,j,n$};

   \draw[gray] (10,-4) node [left]{$1,n-1,n$};
   
\end{tikzpicture}
\caption{Choice of $\{1ji\}$.}
    \label{pic:choice of skip}
\end{figure}
We now define a suitable permutation $\pi$ and prove that the pair $(H, \pi(H))$, where $\pi(H)=\{\pi(T):\ T\in H\}$, violates the exchange axiom among the elements of $M(H)$. In order to do that we have to provide an element $e_1\in H\setminus\pi(H)$ such that for every $e_2\in\pi(H)\setminus H$ we have that $(H\setminus e_1)\cup e_2\notin M(H)$.

We split the argument according to the following two cases.

\textbf{Case 1:} $\{2,3,n\}\in H$. Consider the transposition $\pi=(1,i-1)$ on $[n]$ written in one-row notation. 

By shiftedness of $H$ and the assumption $\{2,3,n\}\in H$, we have that $\{1,2,n\}\in H$. Note also that $\{1,2,n\}\notin \pi(H)$ since $\pi^{-1}(\{1,2,n\})=\{2,i-1,n\}>_p\{1,i-1,n\}\ge_p\{1,j,i\}\notin H$.
We use above that $i-1>2$, which holds since otherwise $\{1,2,3\}\notin H$ and thus $H$ would be empty. Take $e_1=\{1,2,n\}$.

As $e_2\in \pi(H)\setminus H$, by the definition of $\pi$, 
either $1\in e_2$ or $i-1\in e_2$. 
We exclude the former: 
if $1\in e_2$ 
then $\pi^{-1}(e_2)$ contains $i-1$, hence
 $\pi^{-1}(e_2)
>_p e_2\notin H$, so $\pi^{-1}(e_2)\notin H$, a contradiction.  
Thus, $i-1\in e_2$, and we split into two cases:

\textbf{1.} $e_2=\{i-1,x,y\}$ (not assumed to be ordered) for some $x<y$ and $y\ge  i$. 
Since $\{1,j,i\}\notin H$ also $\{1,i-1,y\}\notin H$
and we apply Lemma \ref{lemma: homology} (with $T=\{1,2,3,n\}$ and $\{i_2,i_3\}=\{i-1,y\}$).

\textbf{2.} $e_2=\{i-1,x,y\}$ (not assumed to be ordered) for some $x<y$ and $y<i$. 
Then $\max e_2=i-1<n=\max e_1$ and we
apply Lemma \ref{lemma: induced subgraph}.

\textbf{Case 2:} $\{2,3,n\}\notin H$.
Denote $m_H=:\{a,b,c\}$, in order.
We split the argument according to the following two subcases: either $a=2,b=3$ or not, and find a suitable permutation $\pi$ for each (we set $e_1=m_H$ for both cases).

\textbf{1.} $\{a,b\}\ne \{2,3\}$.

Define a permutation $\pi:[n]\to[n]$ written in the two-row notation as: 
    $$\pi=\left(\begin{smallmatrix}
1 & \ldots & a & \ldots & b-1 & b & \big| & \text{order}\\
1 & \ldots & a & \ldots & b-1 & n & \big| & \text{preserving}
\end{smallmatrix}\right),$$
i.e. the set $[b]$ is mapped as specified above and $\pi:[n]\setminus [b]\to[n]\setminus\pi([b])$ is the order-preserving bijection.

Take $e_1=m_H$. Note that $e_1\notin\pi(H)$. Indeed, 
if $c<n$ then $\pi^{-1}(m_H)=\{a,b+1,c+1\}>_p m_H$, thus $\pi^{-1}(m_H)\notin H$ by maximality of $m_H$, and if $c=n$ then $m_H=\{a,b,n\}\ge_p\{2,3,n\}\notin H$, a
contradiction to $H$ being shifted.

We deal separately with the cases $e_2<_{\text{lex}}m_H$ and $e_2>_{\text{lex}}m_H$ (since $e_2\in\pi(H)\setminus H$ and $m_H\in H$ we do not need to consider the case $e_2=m_H$):

$\bullet\  \bf{e_2<_{\text{lex}}m_H}$. By Lemma \ref{lemma: e_2<max} with $e_1=m_H$ we get that $\Delta((H\setminus m_H)\cup e_2)<_{\text{lex}}\Delta(H)$. Thus, $(H\setminus m_H)\cup e_2\notin M(H)$ and we are done.

$\bullet\  \bf{e_2>_{\text{lex}}m_H}$. Then, by the definition of $\pi$ and $m_H$, $e_2$ has the form $e_2=\{a,\pi(i),n\}$ for some $b<i\le c$. We argue by Lemma \ref{lemma: compound matrix with s} with $e_1=m_H$, and with $S=\{2,a,n\}$ if $a\ne2$ and $S=\{a=2,b-1,n\}$ otherwise (here $b-1\ne 2$ as we assumed $\{a,b\}\ne\{2,3\})$. Note that in either case $S<m_H$ and $S\notin H$ since $\{2,3,n\}\notin H$, so the lemma applies.

\textbf{2.} $\{a,b\}=\{2,3\}$.

Define a permutation $\pi:[n]\to[n]$ written in the two-row notation as: 
    $$\pi=\left(\begin{smallmatrix}
1 & 2 & 3 & \big| & \text{order}\\
1 & n-1 & n & \big| & \text{preserving}
\end{smallmatrix}\right),$$
i.e. the set $[3]$ is mapped as specified above and $\pi:[n]\setminus [3]\to[n]\setminus\pi([3])$ is the order-preserving bijection.

Set $e_1=m_H$. Note that $e_1\notin\pi(H)$ since 
$\{n-1,n\}\subset \pi^{-1}(e_1)$ but no element of $H$ contains $\{n-1,n\}$ as $\{1,j,i\}\le_p\{1,n-1,n\}$ and  $\{1,j,i\}\notin H$.

Again, we deal separately with the cases $e_2<_{\text{lex}}m_H$ and $e_2>_{\text{lex}}m_H$ (we do not need to consider the case $e_2=m_H$ as  $e_2\in\pi(H)\setminus H$):

$\bullet\  \bf{e_2<_{\text{lex}}m_H}$. By Lemma \ref{lemma: e_2<max} 
we get 
$\Delta((H\setminus m_H)\cup e_2)<_{\text{lex}}\Delta(H)$, thus $(H\setminus m_H)\cup e_2\notin M(H)$.

$\bullet\  \bf{e_2>_{\text{lex}}m_H}$. Then
$e_2$ has the form $e_2=\{\pi(i),n-1,n\}$ for some $b<i\le c$. We argue by Lemma~\ref{lemma: homology}, with $T=\{1,2,3,c\}$ and $i_2=n-1$, $i_3=n$. 

This completes the proof of the theorem.    
\end{proof}

We now turn to prove Theorem \ref{main theorem}. Recall that this theorem applies to  shifted $k$-uniform hypergraphs which do not contain a certain $k$-subset $s_H$ defined in terms of $m_H$. We define $s_H$ by \eqref{S for case 1}-\eqref{S for case 2} based on the following two cases:

\begin{enumerate}
\item[\bf{Case 1:}]  The maximum element of $m_H$ is not equal to $n$. Then $m_H$ has the form 
    \begin{equation} \label{eq: max in case 1}
    m_H=\{a,\ldots,b,c,c+1,\ldots,c+l,d\}
    \end{equation}
    with $d\ne n$ and $b< c-1$, where $l\in\mathbb Z_{\ge0}$. Note that possibly $c$ is the smallest element of $m_H$ in which case $m_H=\{c,c+1,c+2,\ldots,c+l,d\}$, namely $\{a,\ldots,b\}$ could be the empty set.

    If $c=1$, then $m_H=\{1,2,3,\ldots,l+1,d\}$. Then by the shiftedness of $H$ the hyperedges of $H$ form an initial lex-segment. Thus, we can assume $c\ne 1$. (Actually, by Lemma \ref{lemma: reduction} we can even assume that $1\notin m_H$, but we will not use it.)

Then define:
\begin{equation} \label{S for case 1}
s_H=\{a,\ldots,b,c-1,n-(l+1),n-l,\ldots,n-1\}.
\end{equation}

\item[\bf{Case 2:}] 
The maximum element of $m_H$ is equal to $n$. Then $m_H$ has the form 
\begin{equation} \label{eq: max in case 2}
m_H=\{a,\ldots,b,c,c+1,c+2,\ldots,c+l,n-m,n-m+1,\ldots,n-1,n\},
\end{equation}
where $b< c-1$, $l,m\in\mathbb Z_{\ge0}$. 
Note that $n-m$ might be the smallest element of $m_H$, and then $m_H=\{n-m,n-m+1,\ldots,n-1,n\}$, namely $\{a,\ldots,b,c,c+1,c+2,\ldots,c+l\}$ could be the empty set. However, if $n-m$ is the smallest element of $m_H$, $H$ is a complete hypergraph, and, in particular, its hyperedges form an initial lex-segment. Thus, from now on we will assume that $n-m$ is not the smallest element of $m_H$. In particular, that $c+l<n-m-1$. This implies that $c\in m_H$ (still $c$ might be the smallest one, and then $m_H=\{c,c+1,c+2,\ldots,c+l,n-m,n-m+1,\ldots,n-1,n\}$, namely $\{a,\ldots,b\}$ is an empty set).

If $c=1$, then $m_H=\{1,2,3,\ldots,l+1,n-m,n-m+1,\ldots,n-1,n\}$. Then by the shiftedness of $H$ the hyperedges of $H$ form an initial lex-segment. Thus, we can assume $c\ne 1$. (Again, by Lemma~\ref{lemma: reduction} we can even assume that $1\notin m_H$, but we will not use it). 

Then define:
\begin{equation} \label{S for case 2}
s_H=\{a,\ldots,b,c-1,n-m-l,n-m-l+1,\ldots,n-m-1,n-m,\ldots,n\}.
\end{equation}
\end{enumerate}

Note that in both cases $s_H$ is a $k$-subset of $[n]$ since $b\ne c-1$ and $c>1$.

\begin{proof}[Proof of Theorem \ref{main theorem}]
By Corollary \ref{Initial segment is a matroid} if the hyperedges of $H$ form an initial lex-segment, then $H$ is matroidal.
We now prove the opposite direction. 

We define a permutation $\pi:[n]\to[n]$
according to Case 1 and Case 2 separately:
\begin{enumerate}
\item[\bf{Case 1:}] 
In this case we define a permutation $\pi:[n]\to[n]$ written in the two-row notation as: 
    $$\pi=\left(\begin{smallmatrix}
a & \ldots & b & \big| & c & c+1 & c+2 & \ldots & c+l & d & \big| &\text{order}\\
a & \ldots & b & \big| & n-(l+1) & n-l & n-l+1 & \ldots & n-1 & n & \big| & \text{preserving}
\end{smallmatrix}\right),$$
i.e. the elements of $m_H$ are mapped as specified above and $\pi:[n]\setminus m_H\to[n]\setminus\pi(m_H)$ is the order-preserving bijection.

\begin{figure}[ht]
\centering
\begin{tikzpicture}

    \fill[black!30!white] (-5,0) -- (-3,0) -- (-3,0.3) -- (-5,0.3) -- (-5,0);

    \fill[black!30!white] (1.7,0) -- (2,0) -- (2,0.3) -- (1.7,0.3) -- (1.7,0);

    \fill[black!30!white] (-1,0) -- (1,0) -- (1,0.3) -- (-1,0.3) -- (-1,0);
    \fill[black!30!white] (2.4,0) -- (5,0) -- (5,-0.3) -- (2.4,-0.3) -- (2.4,0);

    \fill[black!30!white] (-5,0) -- (-3,0) -- (-3,-0.3) -- (-5,-0.3) -- (-5,0);

    \fill[black!30!white] (-1.3,0) -- (-1,0) -- (-1,0.3) -- (-1.3,0.3) -- (-1.3,0);

    \draw (-5,0) -- node [above left] {}(5,0);
    
    \draw (-5,0) -- node [above right] {}(-5,0.3);
    \draw (-3,0) -- node [above right] {}(-3,0.3);

    \draw (1,0) -- node [above right] {}(1,0.3);
    \draw (1.7,0) -- node [above right] {}(1.7,0.3);

    \draw (-5,0) -- node [above right] {}(-5,-0.3);
    \draw (-3,0) -- node [above right] {}(-3,-0.3);
    \draw (1.7,0) -- node [above right] {}(1.7,-0.3);
    \draw (5,0) -- node [above right] {}(5,-0.3);
    \draw (2,0) -- node [above right] {}(2,-0.3);
    \draw (2,0) -- node [above right] {}(2,0.3);
    \draw (5,0) -- node [above right] {}(5,0.3);

    \draw (1,0) -- node [above right] {}(1,-0.3);
    \draw (2.4,0) -- node [above right] {}(2.4,-0.3);
    \draw (2.4,0) -- node [above right] {}(2.4,0.3);

    \draw (-1.3,0) -- node [above left] {}(-1.3,0.3);
    \draw (-1.3,0) -- node [above left] {}(-1.3,-0.3);

   \draw (-5,0.3) node [above]{$a$};
   \draw (-3,0.3) node [above]{$b$};
   \draw (-0.6,0.3) node [above]{$c+1$};
   \draw (1,0.3) node [above]{$c+l$};
   \draw (1.85,0.3) node [above]{$d$};

   \draw (-4,-0.4) node [above]{$id$};
   \draw (5,0.3) node [above]{$n$};
   \draw (-1.15,0.32) node [above]{$c$};
\end{tikzpicture}
\caption{Image of $m_H$ under the permutation $\pi$, Case 1.}
    \label{pic: pi for case 1}
\end{figure}

\item[\bf{Case 2:}] 
In this case we define a permutation $\pi:[n]\to[n]$ written in the two-row notation as: 
$$\pi=\left(\begin{smallmatrix}
a & \ldots  & b & \big| & c & c+1 & c+2 & \ldots & c+l & \big| & n-m & n-m+1 & \ldots & n-1 & n & \big| &\text{order}\\
a & \ldots & b & \big| & n-m-(l+1) & n-m-l & n-m-l+1 & \ldots & n-m-1 & \big| & n-m & n-m+1 & \ldots & n-1 & n & \big| & \text{preserving}
\end{smallmatrix}\right),$$
i.e. the elements of $m_H$ are mapped as specified above and $\pi:[n]\setminus m_H\to[n]\setminus\pi(m_H)$ is the order-preserving bijection.
\begin{figure}[ht]
\centering
\begin{tikzpicture}
    \fill[black!30!white] (-5,0) -- (-3,0) -- (-3,0.3) -- (-5,0.3) -- (-5,0);
    \fill[black!30!white] (-5,0) -- (-3,0) -- (-3,-0.3) -- (-5,-0.3) -- (-5,0);

    \fill[black!30!white] (3,0) -- (5,0) -- (5,0.3) -- (3,0.3) -- (3,0);
    \fill[black!30!white] (3,0) -- (5,0) -- (5,-0.3) -- (3,-0.3) -- (3,0);
    \fill[black!30!white] (-1,0) -- (1.5,0) -- (1.5,0.3) -- (-1,0.3) -- (-1,0);
    \fill[black!30!white] (0.5,0) -- (3,0) -- (3,-0.3) -- (0.5,-0.3) -- (0.5,0);

    \fill[black!30!white] (-1.3,0) -- (-1,0) -- (-1,0.3) -- (-1.3,0.3) -- (-1.3,0);

    \draw (-5,0) -- node [above left] {}(5,0);
    
    \draw (-5,0) -- node [above right] {}(-5,0.3);
    \draw (-3,0) -- node [above right] {}(-3,0.3);
    
    \draw (1.5,0) -- node [above right] {}(1.5,0.3);
    \draw (3,0) -- node [above right] {}(3,0.3);

    \draw (-5,0) -- node [above right] {}(-5,-0.3);
    \draw (-3,0) -- node [above right] {}(-3,-0.3);
    \draw (3,0) -- node [above right] {}(3,-0.3);
    \draw (5,0) -- node [above right] {}(5,-0.3);
    \draw (3,0) -- node [above right] {}(3,-0.3);
    \draw (3,0) -- node [above right] {}(3,0.3);
    \draw (5,0) -- node [above right] {}(5,0.3);

    \draw (0.5,0) -- node [above right] {}(0.5,-0.3);
    \draw (-1.3,0) -- node [above right] {}(-1.3,-0.3);
    \draw (-1.3,0) -- node [above right] {}(-1.3,0.3);

   \draw (-5,0.3) node [above]{$a$};
   \draw (-3,0.3) node [above]{$b$};
   \draw (-0.6,0.3) node [above]{$c+1$};
   \draw (1.5,0.3) node [above]{$c+l$};
   \draw (3,0.3) node [above]{$n-m$};

   \draw (-4,-0.4) node [above]{$id$};
   \draw (4,-0.4) node [above]{$id$};
   \draw (5,0.3) node [above]{$n$};
   \draw (-1.15,0.32) node [above]{$c$};

\end{tikzpicture}
\caption{Image of $m_H$ under the permutation $\pi$, Case 2.}
    \label{pic: pi in case 2}
\end{figure}
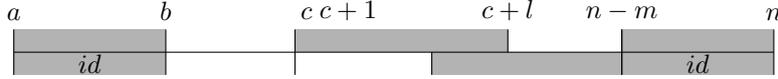

\end{enumerate}

In Figures \ref{pic: pi for case 1} and \ref{pic: pi in case 2} we pictorially explain the definition of $\pi$ according to Cases 1 and 2 respectively. The element $m_H\in H$ is drawn in grey above the horizontal line and its image $\pi(m_H)$ is drawn below the horizontal line.

\textbf{Claim 1.} Every $x\in\{1,\ldots,c-1\}$ is a fixed point of $\pi$, i.e. $\pi(x)=x$.

\begin{proof}[Proof of Claim 1]
In Case 1  $n-(l+1)> c$ since $d<n$ and in Case 2 $n-m-(l+1)> c$ by assumption that $c+l<n-m$. Thus, in either case, $\pi$ is the identity on $[c-1]\cap m_H$ and order-preserving on $[c-1]\setminus m_H$, hence the restriction of $\pi$ to $[c-1]$ is the identity map.
\end{proof}

We now prove that the pair $(H, \pi(H))$, where $\pi(H)=\{\pi(T):\ T\in H\}$, violates the exchange axiom. 
Again, 
we will
provide an element $e_1\in H\setminus\pi(H)$ such that for every $e_2\in\pi(H)\setminus H$ we have that $(H\setminus e_1)\cup e_2\notin M(H)$. Denote $m_H=:\{a_1,a_2,\ldots,a_k\}$.

\textbf{Claim 2.} In Case 1 every $e\in\pi(H)$ such that $e\ge_{\text{lex}}m_H$ has the following form:

\begin{equation} \label{eq: e2 for case 1}
e=\{a,\ldots,b,n-(l+1),n-l,\ldots,n-1\}\cup\{\pi(i)\},\ \text{where}\ c+l<i\le d.
\end{equation}
In Case 2 every $e\in\pi(H)$ such that $e\ge_{\text{lex}}m_H$ has the following form:
\begin{equation} \label{eq: e2 for case 2}
e=\{a,\ldots,b,n-m-(l+1),n-m-l,\ldots,n-m-1,n-m,\ldots,n\}.
\end{equation}

\begin{proof}[Proof of Claim 2]
Suppose $e\in\pi(H)$ is such that $e\ge_{\text{lex}}m_H$. Denote by $f=\{f_1,\ldots,f_k\}:=\pi^{-1}(e)$ the inverse image of $e$ under $\pi$. 
Since $f,m_H\in H$, and $m_H$ is the lex-maximum of $H$,  
either $ f=m_H$ or there exists a minimal $i\in[k]$ such that $f_i<a_i$. We now split our argument according to the cases in the definition of $\pi$.

Case 1: We prove that $f$ has the form $\{a,\ldots,b,c,c+1,\ldots,c+l\}\cup\{i\}$, where $c+l<i\le d$. 

First, we show that $ f $ starts with $a,\ldots,b,c$.
Indeed, suppose not, for a contradiction. Denote $t=|\{a,\ldots,b,c\}|$. 
Then as $f<_{\text{lex}} m_H$, 
the set of least $t$ elements of $f$ is lexicographically smaller than the set of least $t$ elements of $m_H$.
But 
then
$f$ contains an element of the set $ [c-1]\setminus m_H$, denote it $x$. By Claim 1, $\pi(x)=x$, hence $x\in e$. Thus, $e<_{\text{lex}}m_H$, a contradiction.
 Thus,
$$ f=\{a,\ldots,b,c,\ldots\}.$$
Since $f\le m_H$ and $m_H$ has the form \eqref{eq: max in case 1}, we get that 
$$ f=\{a,\ldots,b,c,c+1,c+2,\ldots,c+l,\ldots\}.$$

As $f$ has one more element than already specified we conclude that
$$ f=\{a,\ldots,b,c,c+1,\ldots,c+l\}\cup\{ i\},\ \text{where}\ c+l<i\le d.$$
Thus, by the definition of $\pi$ and by Claim 1, 
$$e=\{a,\ldots,b,n-(l+1),n-l,\ldots,n-1\}\cup\{\pi(i)\},\ \text{where}\ c+l<i\le d.$$

Case 2: Similarly to the previous case we prove that
$$ f=\{a,\ldots,b,c,c+1,c+2,\ldots,c+l,n-m,\ldots,n\}.$$

We have that $f$ starts with $a,\ldots,b,c$ by the same argument as the one used for Case 1. Thus, $f=\{a,\ldots,b,c,\ldots\}$.
Since $f\le m_H$ and $m_H$ has the form \eqref{eq: max in case 2}, we get that 
$f=\{a,\ldots,c,c+1,c+2,\ldots,c+l,\ldots\}$.

Note that $f$ has $m+1$ more elements. Suppose for contradiction that the set of greatest $m+1$ elements of $f$ is not the set $\{n-m,\ldots n\}$. Then, since 
$\pi(\{n-m,\ldots,n\})=\{n-m,\ldots,n\}$, we get that $m_H>\pi(f)=e$, contrary to our assumption $e\ge m_H$. Thus, 
$$ f=\{a,\ldots,b,c,c+1,\ldots,c+l,n-m,\ldots,n\}.$$

By the definition of $\pi$ and by Claim 1 
$$e=\{a,\ldots,b,n-m-(l+1),n-m-l,n-m-l+1,\ldots,n-m-1,n-m,\ldots,n\}.$$
\end{proof}

Note that since $\pi([c-1])=[c-1]$ (Claim 1) and $\pi(c)>c$, then $\pi(m_H)>m_H$ in both cases. Hence, $\pi(m_H)\notin H$. Thus, $H\ne \pi (H)$.

We now prove that by the definition of $\pi$ and Claim 2, $m_H$ actually belongs to the difference $H\setminus\pi(H)$. 

Indeed, by Claim 2 every $e\in\pi(H)$ such that $e\ge m_H$ has the form \eqref{eq: e2 for case 1} in Case 1 and \eqref{eq: e2 for case 2} in Case 2. In Case 1, by \eqref{eq: e2 for case 1} and assuming (for a contradiction) $e=m_H$, we get $c=\min(n-(l+1),\pi(i))$.  
If $c=n-(l+1)$ then $c+l=n-1$ so $\pi(i)=n\in e=m_H$, a contradiction; else $c=\pi(i)$ hence $c+1=n-(l+1),\ldots,c+l=n-2$ thus $d=n-1$, but then by the definition of $\pi$ we have $\pi(n)=c$. Combined, we get $\pi(n)=c=\pi(i)$, so $n=i\le d$. However, $d<n$, a contradiction. In Case 2 $\pi^{-1}(c)>c$, thus from \eqref{eq: e2 for case 2} we conclude that $e\ne m_H$. Hence $m_H\notin \pi(H)$.

Take $e_1=m_H$. Let $e_2 \in \pi(H) \setminus H$.

We deal separately with $e_2<_{\text{lex}}m_H$ and $e_2>_{\text{lex}}m_H$ (again, since $e_2\in\pi(H)\setminus H$ and $m_H\in H$ we do not need to consider the case $e_2=m_H$):

$\bullet\  \bf{e_2<_{\text{lex}}m_H}$. By Lemma \ref{lemma: e_2<max} 
we get $\Delta((H\setminus m_H)\cup e_2)<_{\text{lex}}\Delta(H)$, thus, $(H\setminus m_H)\cup e_2\notin M(H)$.

$\bullet\  \bf{e_2>_{\text{lex}}m_H}$.

 First, note that $|s_H\setminus e_2|=1$ and $|e_2\setminus s_H|=1$. This is a straightforward comparison of the equations \eqref{eq: e2 for case 1} with \eqref{S for case 1} and \eqref{eq: e2 for case 2} with \eqref{S for case 2}. Also, $s_H<_{\text{lex}}m_H$ by construction and $s_H\notin H$ by assumption.

 By Lemma \ref{lemma: compound matrix with s} with $e_1=m_H$ and $S=s_H$ we get $\Delta^e((H\setminus m_H)\cup e_2)<_{\text{lex}}H$. 
 
 Thus, the pair $(H,\pi(H))$ violates the exchange axiom among the elements of $M(H)$.
\end{proof}

\begin{example} \label{example: of not matroidal graph}
Consider the $2$-uniform hypergraph $H=\{12,13,23\}$ on $[5]$. Note that the (hyper)edges of $H$ do not form an initial lex-segment. Thus, by Theorem \ref{main theorem} $H$ is not matroidal. In this case one can verify this directly. Since algebraic shifting preserves $f$-vector and $\beta$-vector (see Theorem \ref{Basic properties of algebraic shifting}(3,4)) and the only shifted $2$-uniform hypergraph on $[5]$ with $3$ (hyper)edges and an one cycle is $H$, we conclude that $M(H)$ consists of all $2$-uniform hypergraph on $[5]$ with  
$3$ edges forming a triangle boundary.
However, the pair of elements
$\{12,13,23\},\ \{14,15,45\}\in M(H)$ does not satisfy the exchange axiom.
\end{example}

\begin{remark}\label{remark:Zemel} 
    Let us now restrict our attention to a class of shifted $k$-uniform hypergraphs which are given by picking a single $k$-subset and closing down w.r.t. the partial order $<_p$. Shifted pure simplicial complexes of this type were called {\itshape shifted matroids} in \cite[Section 4]{Ardila}, and {\itshape principal order ideals} in \cite[Section 5.4]{Klivans thesis}. For a shifted hypergraph which belongs to this class and which is not an initial lex-segment, we have $s_H\notin H$. It follows that Theorem \ref{main theorem} covers all shifted hypergraphs from this class. 
\end{remark}

\section{Matroids arising from symmetric shifting} \label{sec: Matroids arising from symmetric shifting}
Let $H$ be a shifted $k$-uniform hypergraph on $[n]$. 
Denote by
$$M^s(H)=\{G\subseteq \binom{[n]}k|\ \Delta^s(G)=H\}$$
the set of all $k$-uniform hypergraphs whose symmetric shifting is $H$.

\begin{definition}
Let $H$ be a shifted $k$-uniform hypergraph on $[n]$. $H$ is {\itshape s-matroidal} if $M^s(H)$ is the set of bases of a matroid on $\binom{[n]}{k}$.   
\end{definition}
\begin{proposition} \label{thm: symmetric case for graphs}
  A simple graph $H\subseteq\binom{[n]}{2}$ is s-matroidal if and only if the edges of $H$ form an initial lex-segment.  
\end{proposition}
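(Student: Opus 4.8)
My plan is to treat the two implications separately; the substance lies in the "only if" direction, while the "if" direction is the symmetric counterpart of Corollary \ref{Initial segment is a matroid}. For "if", suppose $H\subseteq\binom{[n]}{2}$ is an initial lex-segment. I would mimic the proof of Corollary \ref{Initial segment is a matroid}, replacing the exterior compound matrix by the symmetric model of $\Delta^s$: for a graph $B$ the symmetric shift is read off from the images in the Stanley--Reisner ring $R(K(B))$ of the degree-$2$ monomials $y_ay_b$ in generic linear forms, selected greedily in the lexicographic order, with $y_ay_b$ (for $a\ge 2$) contributing the edge $\{a-1,b\}$. Kalai's construction in \cite[Section 4]{K Symm} packages the resulting linear (in)dependence as a matroid $M'_s(H)$ on $\binom{[n]}{2}$. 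The two inclusions then run exactly as in the exterior case: if $\Delta^s(B)=H$, then the greedy selection shows the vectors indexed by $H$ are independent, so $B$ is independent in $M'_s(H)$; conversely, if $B$ is a basis of $M'_s(H)$ then $\Delta^s(B)$ is a lex-greedy basis of the same size $|H|$ whose elements are $\le_{\text{lex}}$-bounded by $H$, and since $H$ is the lex-smallest set of its size this forces $\Delta^s(B)=H$. Combined with the fact that symmetric shifting preserves the $f$-vector (so all members of $M^s(H)$ have size $|H|$), this gives that $M^s(H)$ is the set of bases of $M'_s(H)$, i.e. $H$ is s-matroidal.

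For the "only if" direction I would rerun the $2$-uniform case of Theorem \ref{thm: main thm 2 and 3} and check that every lemma it invokes survives the passage to symmetric shifting. That proof uses only the Reduction Lemma \ref{lemma: reduction}, the Isolated-vertex Lemma \ref{lemma: partial star with an additional edge 2,3 is not a matroid}, and the Cone Lemma \ref{coning preserve property to not be a matroid} underlying the reduction. The decisive observation is that the proofs of these three lemmas appeal only to the shifting-invariant properties collected in Theorem \ref{Basic properties of algebraic shifting} --- preservation of the $f$-vector and of Betti numbers, the fixing of shifted complexes, containment-monotonicity, commutation with coning, and invariance under relabeling --- all of which hold for symmetric shifting over a field of characteristic zero. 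Hence these three lemmas hold verbatim with $\Delta^e$ replaced by $\Delta^s$, with $M(H)$ replaced by $M^s(H)$, and "matroidal" by "s-matroidal". I would stress that the two exterior-only compound-matrix lemmas (Lemmas \ref{lemma: e_2<max} and \ref{lemma: compound matrix with s}) are never used in the $2$-uniform argument, so nothing is lost in the symmetric setting.

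With these symmetric versions in hand the argument concludes as before. Assuming $H$ is not an initial lex-segment, the symmetric Reduction Lemma lets me assume $\{2,3\}\in H$ together with the existence of some $\{1,i\}\notin H$ with $i>2$; shiftedness and $\{1,i\}\le_p\{1,n\}$ then give $\{1,n\}\notin H$, so $n$ is an isolated vertex. The symmetric Isolated-vertex Lemma now produces a pair in $M^s(H)$ violating the basis-exchange axiom, its proof comparing $\beta_1$ of $H$, of $H\setminus e_1$ (for $e_1\ni 1$ an edge of the triangle boundary), and of $(H\setminus e_1)\cup e_2$ (for $e_2\ni n$), invoking only $\beta_1$-invariance of $\Delta^s$. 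Hence $H$ is not s-matroidal. The step I expect to be the main obstacle is the "if" direction: pinning down the precise linear-algebra model of $\Delta^s$ for graphs and verifying that its greedy lexicographic selection coincides with Kalai's matroid $M'_s(H)$; by contrast the "only if" direction needs no new idea once one observes that the $2$-uniform argument is assembled entirely from properties common to both shiftings.
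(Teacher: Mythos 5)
Your proposal follows the paper's proof in both directions: the ``only if'' part is exactly the paper's observation that the $2$-uniform case of Theorem \ref{thm: main thm 2 and 3} is assembled only from Lemmas \ref{lemma: partial star with an additional edge 2,3 is not a matroid} and \ref{lemma: reduction}, whose proofs use only properties shared by exterior and symmetric shifting. For the ``if'' part the paper carries out precisely the plan you sketch, making the linear-algebra model explicit as the block matrix $Y(B,H)$ of Proposition \ref{cor: initial segment w.r.t. sym is a matroid} (rows $\{x_i^2:i\in[n]\}\cup B$, columns $\{y_1y_i: i\in[n]\}\cup S^{-1}_{un}(H)$, the degree-one blocks accounting for the redundancy property of $\text{GIN}$); note this symmetric matroid $N'(H)$ is constructed by the authors rather than taken from \cite{K Symm}, but your two inclusions coincide with theirs.
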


The fact that the inverse image of an initial lex-segment w.r.t. the lexicographic order under exterior shifting is a set of bases of a matroid was proven via providing a concrete representation of this matroid. This representation was given by a compound matrix. The main goal of this section is finding an analogue of this result for the symmetric shifting of graphs. 

Let $x_1,x_2,\ldots,x_n$ be a set of $n$ variables, and $y_1,y_2,\ldots,y_n$ be their generic linear combinations  over the field $\mathbb F$. 
Let $B$ be some subset of all squarefree monomials in $x_i$'s of degree $2$, and $H$ some subset of squarefree monomials in $y_i$'s of degree $2$. 
Denote by $Y(B,H)$ the $n+|B|$ by $n+|H|$ matrix whose rows are labeled by the set $\{x_i^2|\ i\in[n]\}\cup B$, and columns are labeled by the set $\{y_1y_i|\ i\in[n]\}\cup S^{-1}_{un}(H)$, where $S^{-1}_{un}$ is the inverse of the unsquaring operation $S_{un}$, see Section \ref{subsection: algebraic shifting}; indeed the image of $S^{-1}_{un}$ involve only monomials in $y_2,\ldots,y_n$. Here, the rows and columns of $X(B,H)$ are ordered in such a way that $Y(B,H)$ is a block matrix whose top left block is labeled by $\{x_i^2|\ i\in[n]\}$ and $\{y_1y_i|\ i\in[n]\}$, and the bottom right block by $B$ and $S^{-1}_{un}(H)$, see Fig. \ref{pic:cofactor matrix in symm case}. The rows and columns in each block are labeled by the lexicographic order on monomials.

\begin{figure}[ht]
\centering
\begin{tikzpicture}
    \draw (-2,0) -- node [above left] {}(2,0);
    \draw (0,-2) -- node [above right] {}(0,2);
   
   \draw (-2.2,1) node [left]{$x_i^2$};
   \draw (-1,2.2) node [above]{$y_1y_i$};
   \draw (1,2.2) node [above]{$S^{-1}_{un}(H)$};
   \draw (-2.2,-1) node [left]{$B$};
\end{tikzpicture}
\caption{Matrix $Y(B,H)$.}
    \label{pic:cofactor matrix in symm case}
\end{figure}
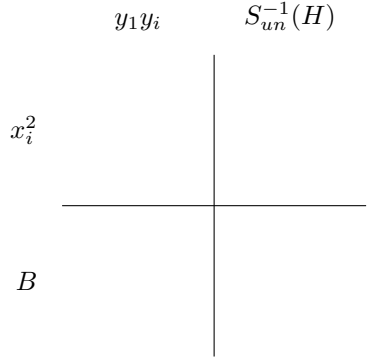

Any monomial in $y_i$'s can be decomposed uniquely in terms of monomials in $x_i$'s: $y_S=\sum\limits\alpha_{ST}x_T$, where $S$ and $T$ are multisets. We are now ready to define the entries of the matrix $Y(B,H)$: at the entry corresponding to the row $x_S$ and the column $y_T$ we put $\alpha_{ST}$.

For a set of degree $2$ squarefree monomials $H$ define a matroid $N'(H)$ on $\binom{[n]}{2}$ as follows: $B\subseteq\binom{[n]}{2}$ such that $|B|=|H|$ is a basis in $N'(H)$ if the rows of $Y(B,H)$ are linearly independent. 

For graphs $B,H$ we will write $Y(B,H)$ and $N'(H)$ identifying the graphs with the monomials spanned by their edges.

\begin{proposition} \label{cor: initial segment w.r.t. sym is a matroid}
 Let $H\subseteq\binom{[n]}2$ be a simple graph whose edges form an initial lex-segment. Then $H$ is s-matroidal.
\end{proposition}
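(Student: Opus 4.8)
The plan is to imitate the proof of Corollary~\ref{Initial segment is a matroid}: I will show that $M^s(H)$ coincides with the set of bases of the (linear) matroid $N'(H)$, whence s-matroidality of $H$ follows at once. The bridge between the two is the observation that the columns of $Y(B,H)$ are exactly the coordinate vectors of the images of the chosen degree-$2$ monomials in the $y_i$'s inside the Stanley--Reisner ring $R(K(B))$. Indeed, the degree-$2$ monomial basis of $R(K(B))$ is $\{x_i^2:i\in[n]\}\cup\{x_ix_j:ij\in B\}$, which is precisely the set of row labels $\{x_i^2\}\cup B$ of $Y(B,H)$; restricting the expansion $y_T=\sum_S\alpha_{ST}x_S$ to these surviving monomials, the column of $Y(B,H)$ indexed by $y_T$ becomes the coordinate vector of $\widetilde{y_T}$ in that basis. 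Hence linear (in)dependence of the columns of $Y(B,H)$ is the same as linear (in)dependence of the images of $\{y_1y_i:i\in[n]\}\cup S_{un}^{-1}(H)$ in the degree-$2$ part $R(K(B))_2$.

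Second, I record the combinatorial fact that activates the initial-lex-segment hypothesis. The map $S_{un}^{-1}$ sends an edge $\{a,b\}$ with $a<b$ to $y_{a+1}y_b$, and this is an order isomorphism from $(\binom{[n]}{2},<_{\text{lex}})$ onto the degree-$2$ monomials not divisible by $y_1$ (ordered by $<_{\text{lex}}$); moreover every $y_1y_i$ is $<_{\text{lex}}$-smaller than every such monomial. Consequently, when $H$ is an initial lex-segment, the column set $\{y_1y_i:i\in[n]\}\cup S_{un}^{-1}(H)$ is an initial lex-segment of \emph{all} degree-$2$ monomials in the $y_i$'s. I will also use that $\{y_1y_i:i\in[n]\}\subseteq\text{GIN}(K(B))$ always: this follows from $y_i\in\text{GIN}(K(B))$ in degree $1$ together with the redundancy property recalled in Section~\ref{subsection: algebraic shifting}. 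Hence the degree-$2$ part of $\text{GIN}(K(B))$ is exactly $\{y_1y_i:i\in[n]\}\cup S_{un}^{-1}(\Delta^s(B))$.

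With these in hand, the forward inclusion is immediate: if $B\in M^s(H)$, i.e. $\Delta^s(B)=H$, then the degree-$2$ part of $\text{GIN}(K(B))$ equals $\{y_1y_i\}\cup S_{un}^{-1}(H)$, so the images of these monomials form a basis of $R(K(B))_2$; by the dictionary the columns of $Y(B,H)$ are linearly independent, and since $f$-vectors are preserved (Theorem~\ref{Basic properties of algebraic shifting}(3)) we have $|B|=|H|$, so $Y(B,H)$ is square and its rows are independent too, giving $B\in N'(H)$. Conversely, if $B\in N'(H)$ then $|B|=|H|$ and the rows of the square matrix $Y(B,H)$ are independent, hence so are its columns; thus the $n+|H|=n+|B|=\dim R(K(B))_2$ images of $\{y_1y_i\}\cup S_{un}^{-1}(H)$ form a basis of $R(K(B))_2$. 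Because this set is an initial lex-segment of all degree-$2$ $y$-monomials whose images already span $R(K(B))_2$, the greedy (lex) selection defining $\text{GIN}$ picks exactly these monomials and nothing lex-larger; therefore the degree-$2$ part of $\text{GIN}(K(B))$ is $\{y_1y_i\}\cup S_{un}^{-1}(H)$, so $\Delta^s(B)=H$, i.e. $B\in M^s(H)$.

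The only genuinely delicate points I would spell out are: (a) the identification of the columns of $Y(B,H)$ with the images $\widetilde{y_T}$, which rests on the row labels being precisely the monomial basis of $R(K(B))_2$; and (b) the greedy-selection step in the converse direction, where one must argue that an initial lex-segment whose images are simultaneously independent and spanning is selected in full and terminates the process. Everything else --- the order-isomorphism property of $S_{un}^{-1}$, squareness via $f$-vector preservation, and the passage between row- and column-independence of a square matrix --- is routine. I expect the main obstacle to be the bookkeeping in (b): making precise that the $<_{\text{lex}}$ order on degree-$2$ $y$-monomials used in the $\text{GIN}$ construction is compatible with the block structure of $Y(B,H)$ and with $S_{un}^{-1}$.
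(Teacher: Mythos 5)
Your proof is correct and takes essentially the same route as the paper's: both show $M^s(H)=N'(H)$ by identifying the columns of $Y(B,H)$ with the images of the corresponding degree-$2$ monomials in the $y_i$'s inside $R(K(B))_2$, and then using that $\{y_1y_i:i\in[n]\}\cup S_{un}^{-1}(H)$ is an initial lex-segment of all degree-$2$ $y$-monomials, so the greedy selection defining $\text{GIN}$ must select exactly this set. You merely make explicit two points the paper leaves implicit, namely the column-to-image dictionary and the order-isomorphism property of $S_{un}^{-1}$.
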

\begin{proof}
    In order to prove that $M^s(H)$ is a matroid, we will prove that $M^s(H)=N'(H)$, where the latter is a matroid. 

    We first prove that $M^s(H)\subseteq N'(H)$. Consider a graph $G\in M^s(H)$, i.e. $G$ such that $\Delta^s(G)=H$. Since $\Delta^s(G)=H$ and by Theorem \ref{Basic properties of algebraic shifting}(3) algebraic shifting preserves $f$-vector, we know that $S^{-1}_{un}(G)\subseteq\text{GIN}(G)$ is the set of first $|H|$ monomials in the lexicographic order starting from $x_2^2$. By the same reason $\text{GIN}(H)$ contains $n$ monomials of degree $1$. However, there are only $n$ of them in total, namely, $y_1,\ldots,y_n$, so $\text{GIN}(H)$ contains $\{y_i|\ i\in[n]\}$. By the redundancy property of $\text{GIN}(H)$, 
    the other monomials that
    $\text{GIN}(H)$ contains are of the form $y_1y_i$ for every $i\in[n]$.  Thus, the columns of the matrix $Y(G,H)$ are linearly independent. 
    Hence so are the rows, and we conclude that $G\in N'(H)$.

    We now prove that $N'(H)\subseteq M^s(H)$. Consider a graph $G\in N'(H)$, i.e. $G$ such that the rows of the matrix $Y(G,H)$
    are linearly independent. If the rows of the matrix $Y(G,H)$ are linearly independent, then so are its columns. This means that a basis for the column space of $Y\left(G,\binom{[n]}{2}\right)$ chosen in the greedy way, is lexicographically not greater than $\{y_1y_i|\ i\in[n]\}\cup S^{-1}_{un}(H)$. On the other hand, the latter is an initial lex-segment, hence it cannot be smaller, so they are equal. Applying the operation $S_{un}$ to the monomials labeling these columns, we conclude that $\Delta^s(G)=H$.
\end{proof}

\begin{proof}[Proof of Proposition \ref{thm: symmetric case for graphs}]
If the hyperedges of $H$ form an initial lex-segment, then $H$ is matroidal by Proposition \ref{cor: initial segment w.r.t. sym is a matroid}. Conversely, note that in the proof of Theorem \ref{thm: main thm 2 and 3} for $k=2$ for the "only if" direction, we have used only properties of algebraic shiftings which are satisfied for both exterior and symmetric shifting, namely Lemmas \ref{lemma: partial star with an additional edge 2,3 is not a matroid} and \ref{lemma: reduction}. Thus, that proof is valid in the symmetric case as well.
\end{proof}

\section{Lex-matroidal hypergraphs} \label{sec: Some partial results}

For any $S\in \binom{[n]}k$ denote by
$$H(S)=\{T\in \binom{[n]}k|\ T\in H\text{ and }T\le_{\text{lex}}S\}$$
the sub-hypergraph of $H$ consisting of all hyperedges in $H$ that are lexicographically not greater than $S$.
\begin{definition}
Let $H$ be a shifted $k$-uniform hypergraph on $[n]$. $H$ is {\itshape lex-matroidal} if $H(S)$ is matroidal for every $S$.   
\end{definition}

We prove the analog of Conjecture~\ref{conj:any_k} for this stronger notion:

\begin{proposition} \label{thm: lex-matroidal}
    A shifted $k$-uniform hypergraph $H\subseteq\binom{[n]}{k}$ is lex-matroidal iff the hyperedges of $H$ form an initial lex-segment.
\end{proposition}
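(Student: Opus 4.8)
The plan is to prove both implications, with the reverse (``if'') direction immediate and the forward (``only if'') direction carrying all the weight. For the ``if'' direction, suppose the hyperedges of $H$ form an initial lex-segment, say $H=\{T'\le_{\text{lex}}T_0\}$. Then for every $S$ we have $H(S)=\{T'\in\binom{[n]}{k}: T'\le_{\text{lex}}T_0\text{ and }T'\le_{\text{lex}}S\}=\{T':T'\le_{\text{lex}}U\}$, where $U$ is the lex-smaller of $S$ and $T_0$; this is again an initial lex-segment, hence matroidal by Corollary \ref{Initial segment is a matroid}. As this holds for every $S$, the hypergraph $H$ is lex-matroidal.

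For the ``only if'' direction I would argue by contraposition: assuming the hyperedges of $H$ do not form an initial lex-segment, I produce a single witness $S$ for which $H(S)$ fails to be matroidal. First I record that every $H(S)$ is itself shifted: since $S'\le_p T$ implies $S'\le_{\text{lex}}T$, the lex-down-set $H(S)$ of the shifted family $H$ remains closed under $\le_p$, so $H(S)$ is a legitimate input to the matroidal property. Next, let $U_0$ be the lex-minimal element of $\binom{[n]}{k}\setminus H$; because $H$ is not an initial lex-segment there is a hyperedge of $H$ lex-above $U_0$, and I let $W$ be the lex-minimal such hyperedge. Then $H(W)=I\cup\{W\}$, where $I=\{T<_{\text{lex}}U_0\}$ is an initial lex-segment and $U_0\notin H(W)$, so that $W=m_{H(W)}$ and $H(W)$ is a shifted ``initial segment plus one gapped top element''. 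Since lex-matroidality forces $H(W)$ to be matroidal, it suffices to prove that every such $H(W)$ is \emph{not} matroidal.

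To do this I would follow the template of the proofs of Theorems \ref{thm: main thm 2 and 3} and \ref{main theorem}, producing a permutation $\pi$ so that the pair $(H(W),\pi(H(W)))$ violates the exchange axiom with $e_1=W$. If every hyperedge of $H(W)$ contains the vertex $1$, then $H(W)=\text{Cone}(H')$ is a cone and $H'$ is again an initial segment plus one gapped element, of lower uniformity; I then induct on $k$ via the Cone Lemma \ref{coning preserve property to not be a matroid}, using the identity $H(\{1\}\cup S')=\text{Cone}(H'(S'))$ and the base cases $k\le 3$ supplied by Theorem \ref{thm: main thm 2 and 3}. Otherwise $\{2,3,\ldots,k+1\}\in H(W)$, and I take the permutation $\pi=\pi(W)$ determined by the block structure of $m_{H(W)}=W$ exactly as in Theorem \ref{main theorem}. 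With $e_1=W$, the hyperedges $e_2\in\pi(H(W))\setminus H(W)$ with $e_2<_{\text{lex}}W$ are handled by Lemma \ref{lemma: e_2<max}; for $e_2>_{\text{lex}}W$ I would use Lemma \ref{lemma: compound matrix with s}, taking the auxiliary single-swap set to be $s_{H(W)}$ when $s_{H(W)}\notin H(W)$ (this is exactly the situation covered by Theorem \ref{main theorem}), and otherwise falling back on the gap $U_0$ together with the Homology Lemma \ref{lemma: homology}.

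The main obstacle is precisely the subcase $s_{H(W)}\in H(W)$, which is \emph{not} covered by Theorem \ref{main theorem}. Here I expect to show that $s_{H(W)}\in H(W)$ forces $U_0$ to be a missing hyperedge containing $1$ and forces a full $k$-simplex boundary $\partial T$ to lie in $H(W)$, so that the Homology Lemma applies with $e_1$ a facet of $\partial T$ and with the link $\{i_2,\ldots,i_k\}=U_0\setminus\{1\}$ contained in each large $e_2$ produced by $\pi$. Verifying the single-swap condition $|e_2\setminus s_{H(W)}|=|s_{H(W)}\setminus e_2|=1$ (respectively the link-containment $\{i_2,\ldots,i_k\}\subseteq e_2$) uniformly over all $e_2>_{\text{lex}}W$ is the delicate combinatorial heart of the argument, and is where the interaction between the lexicographic truncation and $\le_p$-shiftedness must be controlled; establishing the trichotomy (cone, $s_{H(W)}\notin H(W)$, or the homological case) exhausts all possibilities and is what makes the characterization go through for every $k$.
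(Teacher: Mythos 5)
Your ``if'' direction matches the paper's and is fine. For the ``only if'' direction, however, your route diverges from the paper's and contains a genuine gap. You pick as witness the lex-minimal hyperedge $W$ of $H$ above the first gap $U_0$, so that $H(W)$ is an initial lex-segment plus one gapped top element, and you then try to show that every such hypergraph is not matroidal by running the machinery of Theorems \ref{thm: main thm 2 and 3} and \ref{main theorem}. But for $k\ge 4$ that machinery applies only when $s_{H(W)}\notin H(W)$ --- that hypothesis is exactly why Theorem \ref{main theorem} is conditional --- and you explicitly leave the complementary subcase $s_{H(W)}\in H(W)$ unproved: you only say you ``expect'' to force a simplex boundary into $H(W)$ and to verify the link-containment or single-swap conditions uniformly over all $e_2>_{\text{lex}}W$, and you yourself call this ``the delicate combinatorial heart of the argument.'' As written, your plan amounts to proving Conjecture \ref{conj:any_k} for the class of ``initial segment plus one element'' hypergraphs, which neither the paper nor your sketch establishes. (Your cone/induction branch and your $s_{H(W)}\notin H(W)$ branch are fine; it is only this last branch that is missing, and it is not vacuous.)

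The paper avoids this entirely by choosing a much smaller witness. After the Reduction Lemma \ref{lemma: reduction} one may assume $\{2,3,\ldots,k+1\}\in H$ and some $\{1,i_2,\ldots,i_k\}\notin H$ with $i_2>2$; the paper then truncates at $S=\{2,3,\ldots,k+1\}$ itself, so that $H(S)$ consists of hyperedges containing the vertex $1$ together with the single hyperedge $\{2,\ldots,k+1\}$, and hence contains the full boundary complex of the simplex $\{1,2,\ldots,k+1\}$. Conjugating by the permutation sending $3,\ldots,k+1$ to $i_2,\ldots,i_k$, the only hyperedge of $\pi(H(S))$ avoiding $1$ is $\{2,i_2,\ldots,i_k\}$, and a single application of the Homology Lemma \ref{lemma: homology} with $e_1=\{2,\ldots,k+1\}$ defeats the exchange axiom for all $k$ at once, with no appeal to Theorem \ref{main theorem}, to $s_H$, or to Lemma \ref{lemma: compound matrix with s}. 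To salvage your approach you would either need to close the $s_{H(W)}\in H(W)$ case or, more simply, move the truncation point down to $\{2,\ldots,k+1\}$ as the paper does.
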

\begin{proof}
 By Corollary \ref{Initial segment is a matroid} if the hyperedges of $H$ form an initial lex-segment, then $H$ is matroidal. For every hyperedge $S$ of $H$ the set $H(S)$ is an initial lex-segment, and thus it is matroidal.

We now prove that if the hyperedges of $H$ do not form an initial lex-segment, then $H$ is not lex-matroidal. 

By Lemma \ref{lemma: reduction} we can assume that there is a set $\{1,i_2,\ldots,i_k\}$, where $i_2>2$, which is not the hyperedge of $H$ and that $\{2,3,\ldots,k,k+1\}$ is a hyperedge of $H$. Define a permutation $\pi:[n]\to[n]$ written in the two-row notation as:
$$\pi=\left(\begin{matrix}
3 & 4 & \ldots & k & k+1 & \big| &\text{order}\\
i_2 & i_3 & \ldots & i_{k-1} & i_{k} & \big| & \text{preserving}
\end{matrix}\right),$$
i.e. the elements of $\{3,4,\ldots,k,k+1\}$ are mapped as specified above and $\pi:[n]\setminus \{3,4,\ldots,k,k+1\}\to[n]\setminus\pi(\{3,4,\ldots,k,k+1\})$ is the order-preserving bijection.

Since $i_2>2$ we note that $\pi(1)=1$ and $\pi(2)=2$.

Note also that by shiftedness of $H$ and the fact that $\{2,3,\ldots,k,k+1\}$ is a hyperedge of $H$ we have that all facets of the boundary complex of the simplex $\{1,2,3,\ldots,k,k+1\}$ are hyperedges of $H$. Then $\pi(H)$ contains as hyperedges all facets of the boundary complex of the simplex $\{1,2,i_2,\ldots,i_{k}\}$.

Take $S=\{2,3,\ldots,k,k+1\}$.
By Theorem \ref{Basic properties of algebraic shifting}(2,6), $\Delta(\pi(H(S)))=\Delta(H(S))=H(S)$. $I':=\{1,i_2,i_3,\ldots,i_k\}\notin H(S)$, however $I'\in \pi(H(S))$ since $\pi(\{1,3,4,\ldots,k,k+1\})=I'$. Thus, $H(S)\ne \pi(H(S))$. We claim that the pair of $k$-hypergraphs $H(S),\pi(H(S))\in M(H(S))$ violates the exchange axiom among the elements of $M(H(S))$. 

Since $\pi(1)=1$ the only hyperedge of $\pi(H(S))$, which does not contain $1$, is $\pi(S)=\{2,i_2,i_3,\ldots,i_{k-1},i_k\}$. 
 By shiftedness of $H(S)$,
$\{2,i_2,i_3,\ldots,i_{k-1},i_k\}\ne \{2,3,\ldots,k,k+1\}$. Thus, $e_1=\{2,3,\ldots,k,k+1\}\in H(S)\setminus\pi(H(S))$. 
The deletion of the hyperedge $e_1$
form $H(S)$ decreases $\beta_{k-1}$, i.e. $\beta_{k-1}(H(S)\setminus e_1)<\beta_{k-1}(H(S))$. There is no hyperedge of $\pi(H(S))$ except possibly $\{2,i_2,\ldots,i_k\}$ whose addition to $H(S)\setminus e_1$ increases $\beta_{k-1}$, because all these hyperedges contain $1$. For $e_2=\{2,i_2,\ldots,i_k\}$ by Lemma~\ref{lemma: homology} $\Delta((H(S)\setminus e_1)\cup e_2)\ne\Delta(H(S))$, hence $(H(S)\setminus e_1)\cup e_2\notin M(H(S))$. Thus, the pair $H(S),\pi(H(S))$ violates the exchange axiom among the elements of $M(H(S))$, hence $H(S)$ is not matroidal. Thus, $H$ is not lex-matroidal.
\end{proof}

\begin{corollary}
    Let $H$ be a shifted $k$-uniform hypergraph which is the union of an initial lex-segment and another set $T$, such that $H$ is not an initial lex-segment. Then $H$ is not matroidal.
\end{corollary}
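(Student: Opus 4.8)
The plan is to read this off Proposition~\ref{thm: lex-matroidal} by pinning down \emph{which} truncation $H(S)$ is allowed to fail the matroid axioms. Write $H=L\cup T$ with $L$ an initial lex-segment, and let $m_H$ be the lex-maximum of $H$. Since $H$ is not an initial lex-segment, Proposition~\ref{thm: lex-matroidal} gives that $H$ is not lex-matroidal, so there is some $S\in\binom{[n]}{k}$ for which $H(S)$ is not matroidal. The entire argument will consist of showing that such an $S$ is forced to satisfy $H(S)=H$, so that $H=H(m_H)$ is itself the offending truncation and hence not matroidal.

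First I would record the easy half of the dichotomy: every truncation strictly below the top is harmless. For $S<_{\text{lex}}m_H$ the set $H(S)=\{T'\in H:\ T'\le_{\text{lex}}S\}$ cannot contain $m_H$, and the decomposition $H=L\cup T$ is used precisely here — it guarantees that for such $S$ the truncation $H(S)$ meets $H$ only inside the initial lex-segment $L$. Consequently $H(S)$ is an intersection of two initial lex-segments, hence again an initial lex-segment, and therefore matroidal by Corollary~\ref{Initial segment is a matroid}.

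Putting the two pieces together closes the proof: the non-matroidal truncation supplied by Proposition~\ref{thm: lex-matroidal} cannot come from an $S<_{\text{lex}}m_H$, so it has $S\ge_{\text{lex}}m_H$ and thus $H(S)=H$. Hence $H$ is not matroidal, as claimed.

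The step I expect to be the real content is the middle one, namely checking that the extra part $T$ sits entirely at the lex-top of $H$, so that truncating below $m_H$ always returns an initial lex-segment. This is exactly the role of the hypothesis $H=L\cup T$, and it is what keeps the statement strictly weaker than the still-open Conjecture~\ref{conj:any_k}: for an unrestricted shifted $H$ the intermediate truncations $H(S)$ with $S<_{\text{lex}}m_H$ need not be initial lex-segments, and the bad truncation produced by Proposition~\ref{thm: lex-matroidal} could then be a proper sub-hypergraph rather than $H$ itself.
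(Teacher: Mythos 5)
Your proof is correct and follows essentially the same route as the paper's: both reduce to Proposition~\ref{thm: lex-matroidal} after observing that every truncation $H(S)$ with $S<_{\text{lex}}m_H$ is an initial lex-segment (hence matroidal by Corollary~\ref{Initial segment is a matroid}), so the only truncation that can fail is $H(m_H)=H$ itself. The one point you leave implicit is that the extra hyperedge $T$ is a single $k$-set which is necessarily the lex-maximum $m_H$ of $H$ (since $L$ is an initial lex-segment and $T\notin L$); this is exactly what makes your middle step go through, and without it (i.e.\ if $T$ were allowed to be several hyperedges) the claim that $H(S)\subseteq L$ for $S<_{\text{lex}}m_H$ would fail.
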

\begin{proof}
    Note that $H(S)$ for every $\{S\in H|\ S<_{\text{lex}}T\}$ is an initial lex-segment, and $H(S)=H(T)$ for every $S\ge_{\text{lex}}T$. Hence $H$ is not matroidal 
    iff $H(T)$ is not matroidal 
    iff $H$ is not lex-matroidal. Then the assertion follows from Proposition \ref{thm: lex-matroidal}.
\end{proof}

\section{Concluding remarks} \label{sec: Concluding remarks}
We state here several directions for further research related to our results.

We characterized s-matroidal simple graphs in Proposition \ref{thm: symmetric case for graphs}. The next natural question is to study s-matroidal hypergraphs.
\begin{problem}
    Characterize which shifted $k$-uniform hypergraphs are s-matroidal.
\end{problem}

Note that algebraic shifting can be defined not only w.r.t. the lexicographic order, but also w.r.t. any linear extension of the partial order $<_p$, see \cite[Section 2.1, Remark]{K Alg Sh}. Thus, one can extend the definition of matroidal hypergraph w.r.t. any linear extension of $<_p$ and try to characterize them.

\begin{remark}
    If $H\subseteq\binom{[n]}{k}$ is a shifted $k$-uniform hypergraph whose hyperedges form an initial segment w.r.t. some linear extension of the partial order $<_p$, then $H$ is matroidal in this extended sense if we shift w.r.t. the same linear extension. In order to see it, one can simulate the proof of Corollary \ref{Initial segment is a matroid}.
\end{remark}

The other direction is not clear.
\begin{problem}
    Characterize combinatorialy  which shifted $k$-uniform hypergraphs are matroidal when we shift w.r.t. an arbitrary linear extension of the partial order $<_p$.
\end{problem}
Any initial segment $H$ w.r.t. the lexicographic order gives rise to a matroid $M(H)$ which we can describe in two different ways. The first one is via exterior shifting: $M(H)=\{G\subseteq \binom{[n]}k|\ \Delta^e(G)=H\}$. The second one is via a concrete representation given by the matroid $M'(H)$, as described in Section \ref{sec: Matroids arising from exterior shifting}. 
For the Crapo-Rota simplicial matroid $M(H)$, where $H=\{S\subset \binom{[n]}{k}|\ 1\in S\}$, we have a third description which is a combinatorial characterization. Namely, the independent sets of $M(H)$ are the $(k-1)$-acyclic collections of $k$-subsets of $[n]$. 
 However, for other initial segments $H$ such a combinatorial characterization of the matroid $M(H)$ is not known. For area-rigidity, and more generally volume-rigidity, this was addressed in \cite[Problem 5.3]{BNP}. 
\begin{problem}
    Find a combinatorial characterization of the matroid $M(H)$ for every initial lex-segment~$H$.
\end{problem}
One can also hope to find a rigidity-style description for any matroid given by an initial lex-segment, like it exists, for example, for $H=\{\{i,j\}|\ \{i,j\}\le_{\text{lex}}\{k,n\}\}$ via hyperconnectivity and for $H=\{\{1,i,j\}|\ \{1,i,j\}\le_{\text{lex}}\{1,3,n\}\}$ via area-rigidity.
 
A computational direction is whether there exists a deterministic polynomial-time algorithm that given a uniform hypergraph checks its membership in $M(H)$ for a fixed initial lex-segment hypergraph $H$. In general, the computation of algebraic shifting involves the computation of symbolic determinants; see~\cite[Problem 8]{K Alg Sh}.
Even the case of $2$-hyperconnectivity, namely $H=\{A:\ A \le_{\text{lex}}\{2,n\}\}$, is open~\cite{Bernstein}.

\section*{Acknowledgements}
We are grateful to Shaul Zemel for pointing us to the class of shifted matroids, addressed in Remark~\ref{remark:Zemel}, and to Yuval Peled for inspiring discussions.


\begin{thebibliography}{}
\addcontentsline{toc}{section}{References}
 \bibitem{Ardila}  Federico Ardila, The Catalan Matroid. Journal of Combinatorial Theory, Series A, vol 104 no.1 49-62, 2003.
\bibitem{BNT} Eric Babson, Isabella Novik, and Rekha Thomas. Reverse lexicographic and lexicographic shifting. J. Algebraic Combin., 23(2):107–123, 2006.
\bibitem{Bernstein} Daniel Irving Bernstein,
  Completion of tree metrics and rank-2 matrices.
  Linear Algebra and its Applications,
  vol 533,
  1-13,
  2017.  
 \bibitem{BK} Anders Bj\"orner and Gil Kalai, An extended Euler-Poincar\'e theorem, Acta Math., 161, 279-303, 1988.
 \bibitem{BNP} Denys Bulavka, Eran Nevo and Yuval Peled, Volume rigidity and algebraic shifting, with 
J. Combin. Theory Ser. B 170, 189–202, 2023.
\bibitem{CR1} Henry H. Crapo and Gian-Carlo Rota, On the foundations of combinatorial theory: Combinatorial geometries. Preliminary edition. The M.I.T. Press, Cambridge, Mass.-London, 1970.
 \bibitem{CR2} Henry H. Crapo and Gian-Carlo Rota, Simplicial geometries. Combinatorics. (Proc. Sympos. Pure Math., Vol. XIX, Univ. California, Los Angeles, Calif., 1968), pp. 71–75. Amer. Math. Soc., Providence, R.I., 1971.
 \bibitem{EKR} Paul Erd\H{o}s, Chao Ko, and Richard Rado, Intersection theorems for systems of finite sets. Quart. J. Math. Oxford Ser. (2), 12:313–320, 1961.
 \bibitem{K ch} Gil Kalai, A characterization of f-vectors of families of convex sets in Rd, Part 1: Necessity of Eckhoff’s conditions, Israel j.Math., 48, 175-195, 1984.
 \bibitem{K Alg Sh} Gil Kalai, Algebraic shifting. In Computational commutative algebra and combinatorics (Osaka, 1999), volume 33 of
Adv. Stud. Pure Math., pages 121–163. Math. Soc. Japan, Tokyo, 2002.
 \bibitem{K Hyper} Gil Kalai, Hyperconnectivity of graphs, Graphs and Combi., 1, 65-79, 1985.
 \bibitem{K Symm} Gil Kalai, Symmetric matroids, J. Comb. Th. B., 50, 54-64, 1990.
 \bibitem{Klivans thesis} Caroline J. Klivans, Combinatorial Properties of Shifted Complexes, Ph.D. Thesis, MIT, 2003.
 \bibitem{Lee} Carl W. Lee. Generalized stress and motions. In Polytopes: abstract, convex and computational (Scarborough, ON, 1993), volume 440 of NATO Adv. Sci. Inst. Ser. C Math. Phys. Sci., pages 249–271. Kluwer Acad. Publ., Dordrecht, 1994.
 \bibitem{Nevo 2005} Eran Nevo, Algebraic shifting and basic constructions on simplicial complexes. J. Algebraic Combin., 22(4):411–433, 2005.
\bibitem{Nevo phd} Eran Nevo, Algebraic Shifting and f-Vector theory, Ph.D. Thesis, arXiv:0709.3265, 2007.
 \bibitem{N 2004} Eran Nevo, Embeddability and stresses of graphs. Combinatorica. 27, 465–472, 2007.
 \bibitem{Oxley} James Oxley, Matroid theory, Second edition, Oxford University Press, Oxford, 2011.
 \bibitem{White} Neil White, Combinatorial geometries. Encyclopedia of Mathematics and its Applications 29.
Cambridge University Press, Cambridge, 1987.

\end{thebibliography}
\end{document}